\newtheorem{theorem}{Theorem}
\newtheorem{proposition}{Proposition}
\newtheorem{lemma}{Lemma}
\newtheorem{corollary}{Corollary}
\newtheorem{conjecture}{Conjecture}
\newtheorem{definition}{Definition}
\newtheorem{remark}{Remark}
\newenvironment{proof}{\noindent \emph{Proof. }}{\hfill \hbox{\rlap{$\sqcap$}$\sqcup$}\\}
\title{On Dense Tetrahedra in Binary Sphere Packings}
\author{
Thomas Fernique
\footnote{HSE University, Moscow, Russia}
\and
Daria Pchelina
\footnote{Laboratoire LIP, ENS Lyon \& Inria Lyon, France}
}
\begin{document}
\maketitle

\begin{abstract}
This paper considers the density of tetrahedra arising in a specific decomposition of packings of unequal spheres in $\mathbb{R}^3$.
It aims to extend a bound obtained in 2D in the 1960s by Florian. The focus is on packings of spheres of sizes $1$ and $\sqrt{2}-1$: the small sphere fits exactly into each octahedral hole of a hexagonal close packing of large spheres, yielding a conjecturally maximally dense packing (for these sizes).
The paper slightly improves, by completely different means, the previous best upper bound on the density of such packings.
The proof combines geometric insight with challenging interval arithmetic computations, which may be of independent interest.
\end{abstract}

%\tableofcontents

\noindent The $10$ computer programs accompanying this article can be found there:
\begin{center}
\url{https://github.com/fernique/florian3D}
\end{center}

%%%%%%%%%%%%%%%%%%%%%%
\section{Introduction}

A {\bf sphere packing} is a set of spheres with disjoint interiors in a given space.
The {\bf density} of a packing is the proportion of space inside these spheres\footnote{Formally, the density is defined as the limit superior when $r\to\infty$ of the proportion of the points within distance $r$ of the origin.}.

The most natural example is spheres in three-dimensional Euclidean space.
It is particularly useful for studying the structure of materials, with spheres representing atoms or nanoparticles.
In this context, the forces of interaction between particles seem to favor denser structures (although the link between forces and density is still mathematically poorly understood).
This has motivated physicists to search for particularly dense packings by numerical simulation \cite{HO11,HST12}.
Physicists are also interested in the case of dimension 2, e.g. when particles are cylinders \cite{LH93,FJFS20}.

Sphere packings in higher dimensions or more exotic spaces are also studied, motivated by the search for good {\em error-correcting codes}.
In a nutshell: the centers of spheres encode packets of information, and any point in a sphere is seen as an erroneous version of the information encoded by the center of that sphere.
Thus, the size of the spheres determines the robustness of the code, and a dense packing allows more packets of information to be encoded in a given volume.
A comprehensive reference on the subject is \cite{CS99}.

The first non-trivial density result is that of unit disk packings in the Euclidean plane.
It was shown that the maximum density was that of unit disks centered on the triangular lattice of size $2$ (disks centered on two vertices of the same triangle are therefore tangent).
A first partial proof was published in 1910 \cite{Thu10}.
A complete proof was given in 1943 \cite{FT43}.
The most elegant proof is probably that provided in 2010 \cite{CW10}: it shows that in any Delaunay triangulation of the centers of a packing, the density of a triangle (i.e. the proportion of the triangle's area covered by disks) is at most the one of a triangle whose three disks are pairwise tangents.
Recall that the {\bf Delaunay triangulation} of a set of points (here the disk centers) is a triangulation such that no point lies inside the circumcircle of any triangle -- it tends to produce triangles that are as regular as possible, avoiding long, skinny ones.
The same bound for the entire packing follows immediately since there exists a packing formed only of such triangles (Fig.~\ref{fig:intro1}).

\begin{figure}[htbp]
\centering
\includegraphics[width=\textwidth]{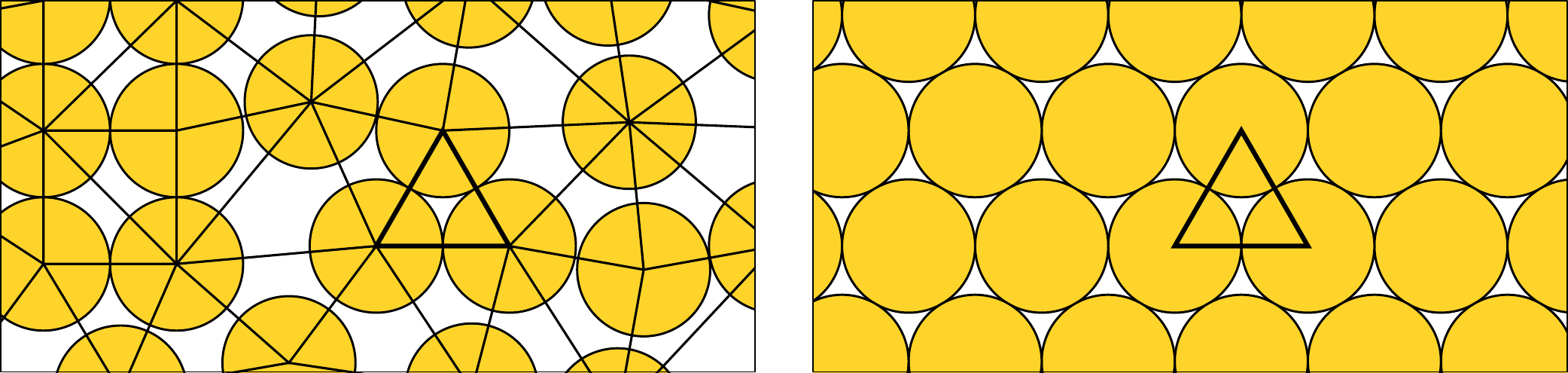}
\caption{
Left: a packing of unit disks in the Euclidean plane and a Delaunay triangulation of the disk centers, with the densest possible triangle emphasized.
Right: a packing made of densest possible triangles only.
}
\label{fig:intro1}
\end{figure}

The case of unit spheres in dimension 3 is much more difficult, and has occupied scientists since at least Kepler's 1610 conjecture, which states that maximum density is reached by the so-called ``cannonball packing'' or {\bf hexagonal close packing} (HCP): the spheres form layers on a triangular grid (as in dimension $2$) and each sphere nestles in the hollow formed by three spheres from the previous layer  (Fig.~\ref{fig:intro2}, left).
Thus, a sphere and the three spheres it touches from the previous layer are centered on a regular tetrahedron.
It was demonstrated in 1958 that this tetrahedron maximizes the density among tetrahedra that can appear in a Delaunay triangulation of the centers of a packing's spheres \cite{Rog58}.
This regular tetrahedron thus plays the same role as the equilateral triangle in the proof of \cite{CW10} mentioned above.
Unfortunately, regular tetrahedra do not tile the Euclidean space of dimension 3.
In particular, in a ``cannonball packing'', the centers of tangent spheres do not always form tetrahedra because, when the spheres of one layer wedge between those of the previous layer, they occupy only half the holes.
Each of the remaining holes becomes the center of an octahedron formed by three spheres of one layer and three spheres of the previous layer (physicists call this an {\em octahedral site}).
Fig.~\ref{fig:intro2}, right, depicts such a packing.
This is sometimes referred to as a {\em frustration} phenomenon: the global optimum (the densest arrangement of spheres over the whole space) is not locally optimal (some spheres could do better if they were not obstructed by others).
Nevertheless, the upper bound on the maximum density of a packing of unit spheres implied by this result remained the best known until the work of Thomas Hales in the late 90s \cite{Hal97}.

\begin{figure}[htbp]
\centering
\includegraphics[width=\textwidth]{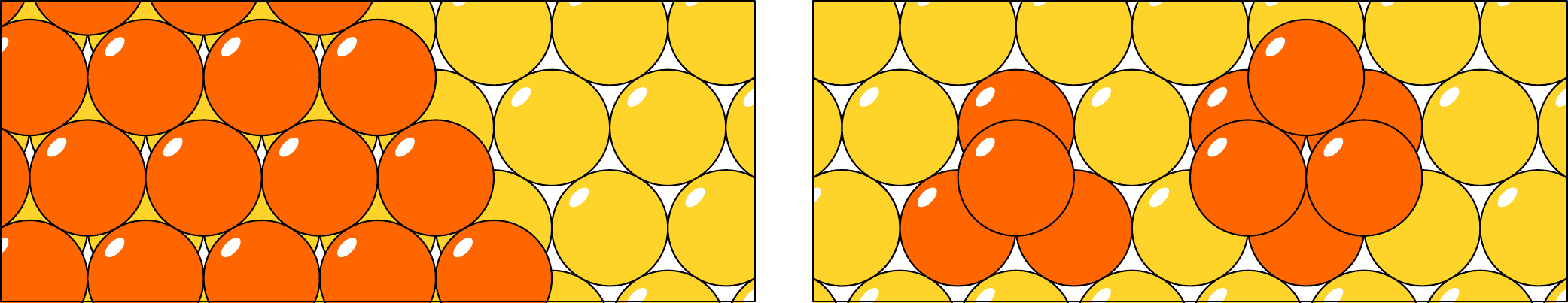}
\caption{
Left: a layer of a hexagonal compact packing (yellow spheres) and some spheres (oranges) of the next layer.
Right: how spheres of consecutive layers form tetrahedra or octahedra.
}
\label{fig:intro2}
\end{figure}

This phenomenon of frustration actually appears as early as dimension 2 for packings of disks of different sizes.
Indeed, it has been proved in \cite{Flo60} that in any {\em FM-triangle decomposition} (a modification of Delaunay triangles introduced in \cite{FM58}) of the centers of a packing of disks of sizes within an interval $[r,1]$, the density of a triangle does not exceed that of a triangle formed by a disk of size $1$ and two disks of size $r$, all three mutually tangent (Fig.~\ref{fig:intro3}, left).
Unfortunately, just as the regular tetrahedron does not tile the space, this triangle does not tile the plane (Fig.~\ref{fig:intro3}, right).
Let us mention that the maximum density was nevertheless characterized for a few specific disk sizes (two sizes in \cite{Hep00,Hep03,Ken04,BF22}, three in \cite{FP23}).

\begin{figure}[htbp]
\centering
\includegraphics[width=0.8\textwidth]{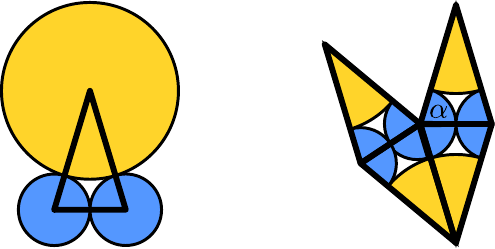}
\caption{
Left: the densest triangle that can appear in a packing of disks of sizes in $[r,1]$.
Right: there is no packing with only such triangles.
Consider indeed the $k$ triangles which share a common small disk: since these triangles must always be paired along their shortest edge, $k$ is even, but $\alpha<\pi/2$ yields $k>4$ while $\alpha>\pi/3$ (because $r<1$) yields $k<6$.
}
\label{fig:intro3}
\end{figure}

What about higher dimension?
To the best of our knowledge, the few known upper bounds have all been obtained using the method introduced by Cohn and Elkies in \cite{CE03} consisting in bounding the density on well-chosen triangles.
Very schematically, this method involves finding a real function $f$ of a variable $x\in\mathbb{R}^n$ that is negative for $|x|\geq 2$ and whose Fourier transform $\widehat{f}$ is positive everywhere; the {\em Poisson summation formula} applied to the centers of the unit spheres of a packing then allows us to use both the fact that these centers are $x\geq 2$ apart (since the spheres do not intersect) and the positivity of $\widehat{f}$ to bound the packing density by $f(0)/\widehat{f}(0)$.
Cohn and Elkies found functions that improved on the best known bounds in dimensions 4 to 24.
In particular, their bounds in dimension 8 and 24 were within a factor less than 1.001 of the conjectured maximum density, which was subsequently proven to be optimal with very ingenious functions \cite{Via17,CKMRV17}.

The method of Cohn and Elkies was extended to the case of several sphere sizes in \cite{Lov14}.
While the results are somewhat disappointing in dimension 2 (Florian's bound does much better), they have no equivalent in higher dimensions.
In particular, the authors found a function allowing them to bound from above by $0.813$ the density of a packing of spheres of size $1$ and $\sqrt{2}-1$.
This particular case is the focus of the work presented in this article.

The case of two spheres of size $1$ and $\sqrt{2}-1$ is indeed particularly interesting, as this is the maximum ratio that allows a small sphere to be inserted into the octahedral sites of an optimal packing of spheres of size $1$ (Fig.~\ref{fig:intro4}).
Moreover, it has been proven in \cite{Fer21} to be the only size ratio that makes it possible to construct a packing whose {\bf contact graph}, whose vertices are the centers of the spheres, connected by an edge when the spheres are tangent, is a ``tetrahedralization'' (that is a generalization of a triangulation -- formally a {\em homogeneous simplicial $3$-complex}).
This is an important point because the cases where a packing of this type exists are the only ones -- with the notable exception of Kepler's conjecture -- for which the maximum density has been characterized.
Intuitively, in such a packing, the spheres arrange themselves locally well, which suggests that it is possible to prove “local optimality implies global optimality” (the emblematic example of which is the packing of unit disks in the plane).
Indeed, this packing is conjectured (e.g. in \cite{HST12}) to maximize the density among packings with spheres of size $1$ and $\sqrt{2}-1$, namely with the following value, slightly less than the above-mentioned upper bound of $0.813$:
\[
\left(\tfrac{5}{3}-\sqrt{2}\right)\pi\approx 0.793.
\]

\begin{figure}[htbp]
\centering
\includegraphics[width=\textwidth]{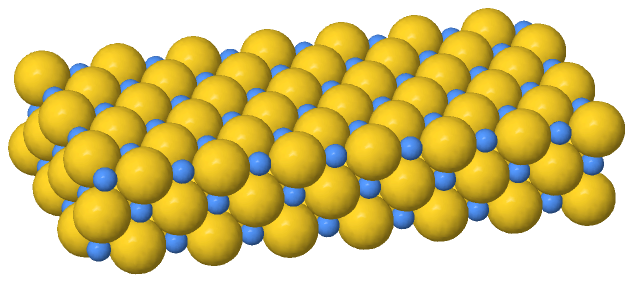}
\caption{
A cannonball packing of unit spheres (yellow) with spheres of size $\sqrt{2}-1$ filling octahedral sites.
}
\label{fig:intro4}
\end{figure}

The aim of this article is to obtain an upper bound on the density of packings by spheres of size $1$ and $\sqrt{2}-1$.
We follow the strategy of \cite{CW10,Rog58,Flo60}: decompose the space into specific tetrahedra -- namely the FM-tetrahedra defined in Section~\ref{sec:settings} -- and bound from above the {\bf tetrahedron density}, defined as the proportion of the volume of the tetrahedron that lies inside the spheres of the packing.
In order to state the main result, we need to introduce two terms.
The {\bf type} of a tetrahedron is the quadruple formed by the radii of the spheres centered on its vertices.
An edge of a tetrahedron is said to be {\bf tight} if the spheres centered at its endpoints are tangent.
Then:
 
\begin{theorem}
\label{th:main}
Let $r:=\sqrt{2}-1$.
The densest FM-tetrahedra of types $1111$, $11rr$ and $1rrr$ have only tight edges.
The densest FM-tetrahedron of type $rrrr$ has $4$ tight edges and two edges that share a vertex and have length
\[
r\sqrt{2\sqrt{6}+6}\approx 1.367.
\]
The densest FM-tetrahedron of type $111r$ has $5$ tight edges and one edge between two large spheres of length
\[
4\sqrt{\frac{2r}{1+2r}}\approx 2.692.
\]
\end{theorem}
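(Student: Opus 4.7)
The plan is to treat each of the five types ($1111$, $11rr$, $1rrr$, $rrrr$, $111r$) separately, formulating the problem as a constrained optimization of the density $\delta$ over the space of edge lengths of an FM-tetrahedron, and certifying each answer by a computer-assisted search in interval arithmetic.

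First I would write $\delta(\ell_1,\dots,\ell_6)$ explicitly as the sum of the four vertex cap volumes (the portion of each ball contained in the tetrahedron, determined by the solid angle at that vertex) divided by the tetrahedron's volume. Each solid angle is a closed-form function of the three incident dihedral angles, and the volume is given by the Cayley--Menger determinant of the six edge lengths. Thus $\delta$ is an elementary function of $(\ell_1,\dots,\ell_6)$ whose value and partial derivatives admit rigorous interval enclosures on any sub-box. The feasible domain for a fixed type is cut out by the tangency lower bounds $\ell_{ij}\ge r_i+r_j$, by the upper bounds coming from the FM-tetrahedron definition of Section~\ref{sec:settings}, and by non-degeneracy of the Cayley--Menger determinant.

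For types $1111$, $11rr$ and $1rrr$ I would conjecture, and then verify, that $\delta$ is maximized at the corner of the feasible box where every edge is tight: this gives a unique tetrahedron per type, whose density is to be compared against all other points of the box. For types $rrrr$ and $111r$ the theorem says that one or two edges should be strictly longer than tangency, so the maximizer lies on a lower-dimensional face of the box obtained by fixing the remaining edges tight. On that slice, one can locate the critical point analytically by solving $\partial_\ell \delta = 0$, which is where the explicit lengths $r\sqrt{2\sqrt{6}+6}$ and $4\sqrt{2r/(1+2r)}$ should fall out after simplification; exploiting the combinatorial symmetry of the tetrahedron (and the fact that the two long edges in the $rrrr$ case are forced to be equal) reduces the dimension further.

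The heart of the proof is the rigorous verification. Starting from the full six-dimensional feasible box, I would recursively subdivide and, on each sub-box, compute interval enclosures of $\delta$ and $\nabla\delta$: discard the box either when the upper interval bound of $\delta$ lies strictly below the claimed optimum, or when some component of $\nabla\delta$ has constant sign, which forces any maximum onto a lower-dimensional face that is then handled recursively on its own. A small neighborhood of each candidate maximizer is finally certified by a negative-definite interval Hessian test. The main obstacle will be numerical control near the tangency corners, where both the spherical caps and the Cayley--Menger volume degenerate simultaneously and naive interval evaluation blows up; handling these corners presumably requires type-dependent analytic reductions, which is consistent with the accompanying code being split into ten specialized programs rather than a single generic solver.
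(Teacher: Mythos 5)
Your plan captures the high-level skeleton of the paper's proof — write $\delta$ as an explicit function of six edge lengths, subdivide recursively, and prune boxes with interval enclosures — but it misses the two ideas that actually make the argument work, and one of your claims about the stretched types is wrong as stated.

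First, the paper does \emph{not} run a full six-dimensional search. The authors report that a direct 6D subdivision was only feasible for types $1111$ and $111r$ (already 11 days of CPU time for $111r$) and estimate the remaining types at an order of magnitude more. The crucial enabling step is an \emph{a priori} dimension reduction (Lemma~\ref{lem:sliding} and Proposition~\ref{prop:first_contact}): one proves that sliding the four spheres equally towards the center of the support sphere does not decrease the $\delta_t^*$-compression and keeps the tetrahedron FM, so it suffices to examine FM-tetrahedra with at least one tight edge — a 5D search split into the $9$ cases of Fig.~\ref{fig:one_contact}. Your suggestion that a constant-sign component of $\nabla\delta$ pushes the maximum ``onto a lower-dimensional face that is then handled recursively'' does not achieve this, because the feasible region is not a box: it is bounded by the curved hypersurface $R=r$ where $R$ is the support-sphere radius. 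A constant-sign partial derivative sends you to that 5D hypersurface, not to a coordinate face, and parametrizing it is as expensive as the original problem.

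Second, your treatment of the stretched types $rrrr$ and $111r$ is incorrect. The densest tetrahedra of these types are \emph{not} unconstrained critical points where $\partial_\ell\delta=0$ on a coordinate slice; they sit on the constraint boundary $R=r$, and the density would keep increasing if one stretched the non-tight edge(s) further, except that doing so leaves the space of FM-tetrahedra (see Fig.~\ref{fig:local_maxima}). Consequently a negative-definite interval Hessian test cannot certify local optimality there — it is the wrong certificate for a boundary maximum. The paper instead introduces the auxiliary function $f:=R-\alpha\,\delta$ with a carefully chosen positive weight $\alpha(T)$, and shows by interval-evaluating $\nabla f$ on a neighborhood $T_t^*+B_\varepsilon$ and invoking the mean value theorem that any $T$ in that neighborhood with $\delta(T)>\delta(T_t^*)$ has $R(T)>r$, hence is not an FM-tetrahedron. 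Even for the tight types, the certificate used is a sign condition on $\nabla\delta$ over a one-sided box $[0,\varepsilon]^6$ (the optimum is at a corner), not a Hessian test. You would need to add both the sliding dimension reduction and a boundary-aware local certificate to turn your outline into a working proof.
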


\begin{figure}[htb]
\centering
\includegraphics[width=0.9\textwidth]{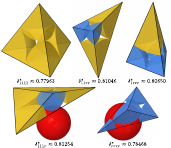}
\caption{
The densest FM-tetrahedra of each type, with a numerical approximation of their density.
Blue spheres have radius $\sqrt{2}-1$, yellow ones radius $1$, and red ones are the support spheres defined in Section~\ref{sec:settings}: they are tangent to the four other spheres).
In the two bottom cases, the upper blue sphere is tangent to the opposite face and the red sphere has the same size.
}
\label{fig:densest_tetrahedra}
\end{figure}

Figure~\ref{fig:densest_tetrahedra} depicts each of these densest FM-tetrahedra and provides a numerical approximation of their densities (which can be computed from the data in the theorem).

\begin{corollary}
A packing of spheres of radii $1$ and $\sqrt{2}-1$ has density at most
\[
\delta^*_{111r}=
0.812542027810834866943600528883352220338748559263354479\ldots %835
\]
\end{corollary}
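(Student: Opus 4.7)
The strategy is to combine Theorem~\ref{th:main} with the general fact, to be established in Section~\ref{sec:settings}, that any packing of spheres of radii $1$ and $r:=\sqrt{2}-1$ admits a decomposition of $\mathbb{R}^3$ into FM-tetrahedra, and that the density of the packing is at most the supremum, over all admissible FM-tetrahedra $T$ that can appear in such a decomposition, of the local density of $T$ (the proportion of $T$ covered by the four spheres centered at its vertices). Since every FM-tetrahedron has one of the five types $1111$, $111r$, $11rr$, $1rrr$, $rrrr$, this supremum is in turn the maximum, over these five types $\tau$, of the supremum of the density taken over FM-tetrahedra of type $\tau$.

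Theorem~\ref{th:main} already identifies, for each type $\tau$, a concrete densest FM-tetrahedron $T_\tau$ attaining the latter supremum; the corollary is therefore reduced to evaluating the density of each of these five explicit tetrahedra and comparing the five resulting numbers.

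For each $T_\tau$ I would proceed as follows. All six edge lengths of $T_\tau$ are given explicitly, so the Cayley--Menger determinant yields $\mathrm{vol}(T_\tau)$ in closed form, and a direct placement of the vertices in coordinates gives the four solid angles $\Omega_1,\dots,\Omega_4$ at the vertices (for instance via the Van Oosterom--Strang formula applied to the three unit vectors pointing from $v_i$ along the adjacent edges). Because the four spheres have pairwise disjoint interiors and, in each of these extremal configurations, do not protrude beyond the opposite face, the volume of $T_\tau$ covered by the spheres is simply
\[
\frac{1}{3}\sum_{i=1}^{4}\Omega_i\,R_i^{3},
\]
where $R_i\in\{1,r\}$ is the radius attached to $v_i$, and the density of $T_\tau$ is the ratio of this quantity to $\mathrm{vol}(T_\tau)$.

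Carrying out this routine computation for the five densest tetrahedra given by Theorem~\ref{th:main} and comparing, one finds that the maximum is attained at $\tau=111r$ and equals the stated constant $\delta^*_{111r}$. There is no real obstacle here: Theorem~\ref{th:main} has pinned down the geometry of each $T_\tau$ completely, so the proof reduces to algebraic and numerical manipulations that can be performed to any desired precision (in particular, in the same interval arithmetic framework used elsewhere in the paper), which also justifies the length of the announced decimal expansion.
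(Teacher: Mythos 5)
Your reconstruction is essentially the route the paper takes (the paper does not spell out the corollary's proof, but the intended argument is exactly: reduce to saturated packings, use the FM-decomposition to bound the packing density by the maximal FM-tetrahedron density, invoke Theorem~\ref{th:main}, and evaluate the density of the five explicit tetrahedra). One small point: you assert that for the extremal tetrahedra the spheres do not protrude beyond the opposite face, so that $\mathrm{cov}(T)=\tfrac13\sum_i\Omega_i R_i^3$ holds with equality. This is Conjecture~\ref{conj:sphere_sector}, which the paper does not prove (and does not need): it uses only the inequality $\mathrm{cov}(T)\le\tfrac13\sum_i\Omega_i R_i^3$, which holds unconditionally (pairwise disjointness of the spheres suffices), and Theorem~\ref{th:main} is really a statement about the maximum of this upper-bound expression. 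For the upper bound in the corollary, the inequality is all that is needed, so the equality claim is unnecessary; if you want to keep it as an equality, you should at least note that it is a finite verification for those five specific tetrahedra, not something that follows for free from them being FM-tetrahedra.
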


This improves very slightly the value $0.81358\ldots$ obtained in \cite{Lov14} through entirely different means — namely, a semi-definite programming search for a suboptimal function for an extension to several sphere sizes of the Cohn-Elkies method \cite{CE03}.
The geometrical insight into the structure of an optimal packing obtained here could help to add extra constraints to that semi-definite programming approach and may lead to better bounds.

It is still larger than the conjectured maximal density ($0.793$) because the packing depicted in Fig~\ref{fig:intro4} is made of tight tetrahedra of type 1111 and 111r, not of the densest tetrahedron of type 111r, that does not tile the space (again a frustration phenomenon).

Florian's result \cite{Flo60} characterizes the densest triangle in a packing of disks with sizes in the whole interval $[r,1]$.
In contrast, we characterize the densest tetrahedron only for the sizes $1$ and $r=\sqrt{2}-1$.
Actually, as we shall see, the densest tetrahedron is likely to depend on $r$.
Florian's proof, from which it seems natural to draw inspiration here, works in two stages:
\begin{enumerate}
\item 
Show that density is maximized for a triangle with two edges along which the disks touch (tight edges) -- this stage is actually done by Fejes T\'oth and Moln\'ar in \cite{FM58} (we have coined the term FM-tetrahedron from the authors' names).
\item
Study the density as a function of three variables -- the length of the non-tight edge and the two smallest disk radii -- this is what is done in \cite{Flo60}.
\end{enumerate}
However, the task is considerably complicated here due to an elementary fact: while the sum of the angles of a triangle is always $\pi$, that of the solid angles of a tetrahedron is variable.
This fact indeed plays a key role in the first point of the Florian theorem proof, see Fig.~\ref{fig:intro5}.
Here, despite our best efforts, we have only managed to show that density is maximized on a tetrahedron with at least one tight edge.
Hence, to mimic the second stage of Florian's theorem, we need to study a function of 5 variables (the lengths of the other edges).
Moreover, our function is considerably more complicated, as the expression of the density is much simpler for a triangle than for a tetrahedron.
Since Florian's study is already relatively tedious (the 13-page article \cite{Flo60} is essentially made up of calculations that go as far as deriving the studied function 5 times), we suspected that this task could not be carried out by hand.
We therefore turned to a computer-assisted proof.

\begin{figure}[htbp]
\centering
\includegraphics[width=\textwidth]{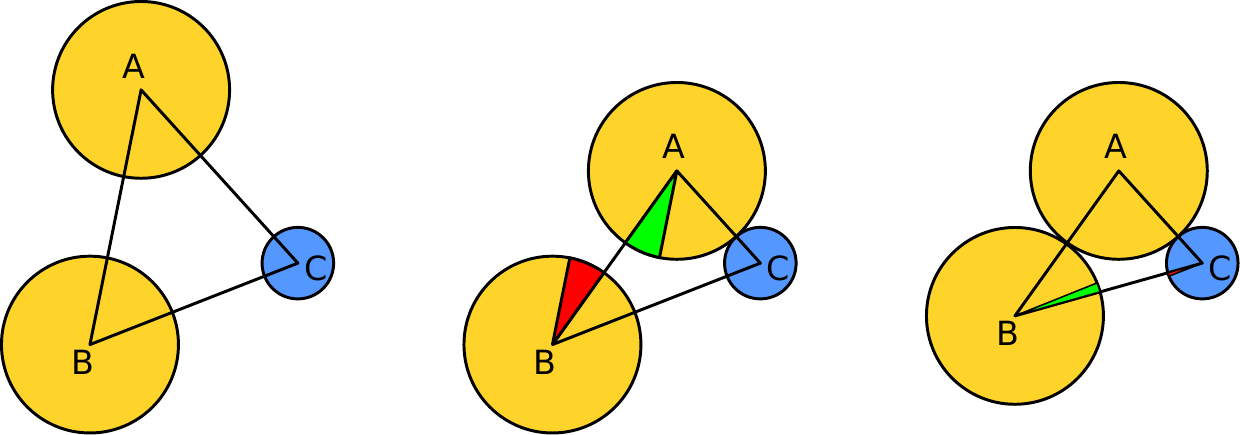}
\caption{
Left: consider a triangle ABC with disks centered on its vertices.
Center: if the disk in A is not smaller than the one in B, then by sliding it towards C as much as possible, the area of ABC decreases.
Further, the slice gained in A (green) is not smaller than the one lost in B (red): both have indeed the same angle \underline{because the sum of the angles of ABC is constant} and the angle in C does not change, and the disk in A has been assumed to be larger or equal to the disk in B.
Right: the same argument allows to slides the disk in B towards the one in A not decreasing the density.
This is the key idea of \cite{FM58} for considering only triangles with two tight edges.
}
\label{fig:intro5}
\end{figure}

Basically, the principle of computer-assisted proof is to make the computer do the verification work that would take too much time for a human -- typically verifying a large number of cases.
A classical example is the proof of the 4-color theorem, generally considered to be the first theorem proved with the help of a computer.
Surprisingly, the verification of a continuum of cases can sometimes also be done with the help of a computer, in particular with the help of {\em interval arithmetic}.
The first such example is the proof by Tucker in 2002 of the 14-th Smale's problem (namely, the Lorenz attractor exhibits the property of a strange attractor) \cite{Tuc02}.
Another remarkable example -- especially for the problem we are interested in here -- is Hales' proof of the aforementioned Kepler conjecture between 1998 and 2005 \cite{Hal05}.
Several other examples of famous problems solved in this way are listed in \cite{Hal14}.
Here, the proof of Theorem~\ref{th:main} consisted in recursively halving the tetrahedron set into millions of small blocks, on each of which the density can be suitably bounded from above.
The interest of the proof lies more in the way in which the computations are carried out and optimized than in its own mathematical content.

%validated numerics \cite{Tuc11}

\begin{figure}[htbp]
\centering
\includegraphics[width=0.8\textwidth]{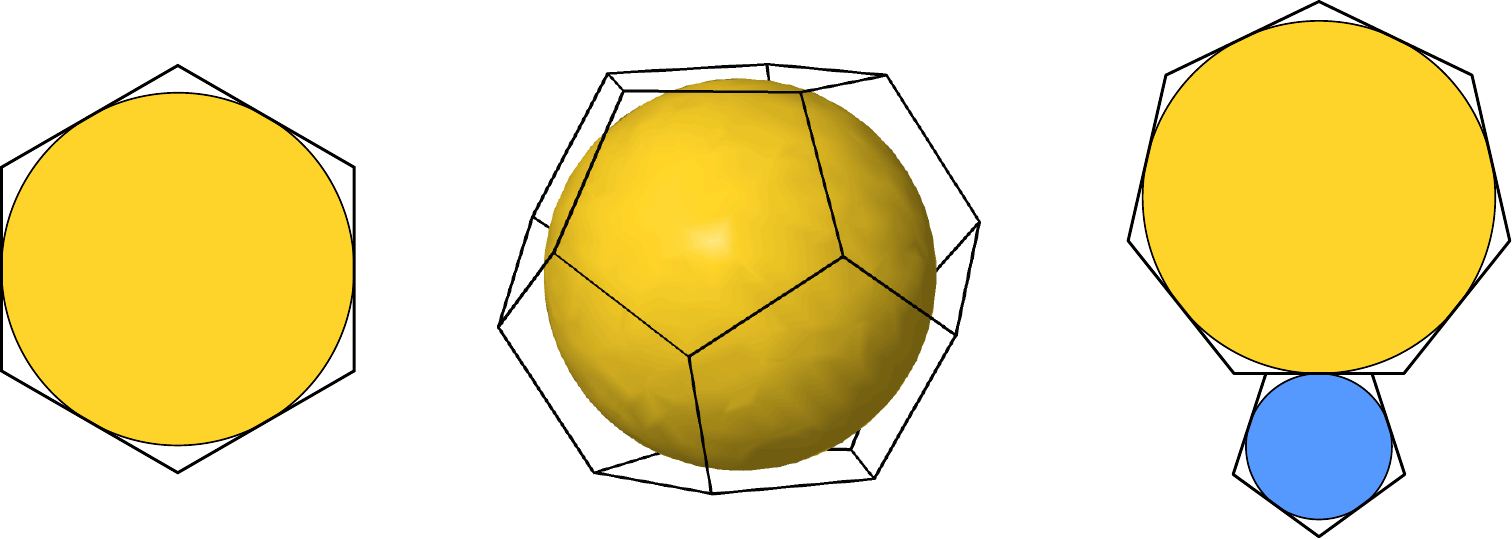}
\caption{
Left: the densest Voronoi cell in a packing of unit disks.
Center: the densest Voronoi cell in a packing of unit spheres.
Right: the density of this figure is larger than the density of any packing of disks of these sizes.
}
\label{fig:intro6}
\end{figure}

To conclude this (long) introduction, let us return briefly to Fejes T\'oth's proof of the maximum density of a packing of unit disks.
This proof is not based on the Delaunay decomposition of plane but on the Voronoi decomposition.
Recall that the {\bf Voronoi decomposition} of a set of points $\mathcal{S}$ is the partition of the plane whose elements, called cells, each consist of a point of $\mathcal{S}$ together with all points of the plane that are closer to that point than to any other point in $\mathcal{S}$.
In \cite{FT43}, Fejes T\'oth proves that the densest cell of a Voronoi decomposition of the centers of a packing of unit disks is a regular hexagon circumscribed to a disk.
The proof follows since the density of such a cell is the optimal density (Fig.~\ref{fig:intro6}, left).

In the same paper, Fejes T\'oth proposed the so-called {\em dodecahedral conjecture}: the densest Voronoi cell in a packing of unit spheres in $\mathbb{R}^3$ is a regular dodecahedron circumscribed to a sphere (Fig.~\ref{fig:intro6}, center).
A computer-assisted proof of this conjecture was established by Hales and McLaughlin in 2010 \cite{HL10}.
Once again, there is a problem of frustration, since the regular dodecahedron, as the regular tetrahedron, does not tile the space.
Nevertheless, the upper bound on the density of a packing of unit spheres derived from the validity of the dodecahedral conjecture was the first progress since \cite{Rog58}, before Hales finally proved Kepler's conjecture shortly afterwards, using a sophisticated hybrid space decomposition between Voronoi cells and Delaunay tetrahedra.

In dimension 2, the use of modified Voronoi diagrams (namely {\em power diagrams}) helped to improve the Florian bound for certain disk sizes.
It was indeed shown in \cite{Bli69} that the density of a packing of disks of sizes in $[r,1]$ was bounded from above by the density in the union of a heptagon circumscribing a disk of size $1$ and a pentagon circumscribing a disk of size $r$ (Fig.~\ref{fig:intro6}, right).
To our knowledge, no similar result has yet been obtained in dimension $3$ or larger.
This might be related to the problem of minimizing the volume of a $n$-faces polyhedra circumscribed to the unit sphere (known as {\em isoperimetric problem for polyhedra} or {\em roundest polyhedra problem}, see, e.g. \cite{Gol35}).
Here, too, the computer is likely to have its say.

The rest of the article is organized as follows.
Section~\ref{sec:settings} formally defines FM-tetrahedra and discusses the main formulas used thereafter.
Section~\ref{sec:strategy} outlines the proof strategy, in particular the use of interval arithmetic.
Section~\ref{sec:epsilon} proves that tetrahedra claimed in Theorem~\ref{th:main} to maximize density among all FM-tetrahedra do so at least locally.
Then, Section~\ref{sec:dim_reduc} explain how to extend this result to the whole space of FM-tetrahedra.
This last point is done by computer, and Section~\ref{sec:computer} gives more details on how.
Last, Appendix~\ref{sec:code} gives an overview of the various programs used in the proof, while Appendix~\ref{sec:proof_FM_faces} provides a technical proof of a proposition appearing in Section~\ref{sec:settings}.

%%%%%%%%%%%%%%%%%%%%%%%%%%%%%%%%%%%%%%%%%%%%%%%%%%%%%%%%%%%%%%%%%%%%%%%%%%%%%%%%%%%%%%%%
%%%%%%%%%%%%%%%%%%%%%%%%%%%%%%%%%%%%%%%%%%%%%%%%%%%%%%%%%%%%%%%%%%%%%%%%%%%%%%%%%%%%%%%%
%%%%%%%%%%%%%%%%%%%%%%%%%%%%%%%%%%%%%%%%%%%%%%%%%%%%%%%%%%%%%%%%%%%%%%%%%%%%%%%%%%%%%%%%

\section{FM-tetrahedra}
\label{sec:settings}

%%%%%%%%%%%%%%%%%%%%%%%%%%%%%%
\subsection{Definitions and properties}

A {\bf packing} of spheres in the Euclidean space is an arrangement of interior-disjoint spheres (viewed as solid balls).
In \cite{FM58} a decomposition associated with such a packing was defined as follows:
\begin{itemize}
\item
First, define the {\bf cell} of a sphere as the set of points of the Euclidean space which are closer to this sphere than to any other sphere.
It is star-shaped (with respect to sphere centers) and delimited by hyperboloid sheets (whose foci are the sphere centers).
\item
Then, consider the graph which links the centers of spheres whose cells are adjacent: it yields a decomposition of the Euclidean space in polyhedra.
If such a polyhedron is not a tetrahedron (this is non-generic), decompose it arbitrarily in several tetrahedra.
\end{itemize}
We shall here call {\bf FM-decomposition} of the sphere packing this decomposition.
It is a specific case of {\bf additively weighted Delaunay triangulation} (see e.g. Section 18.3.1 in \cite{BY98}), where the seeds are the sphere centers and weights the sphere radii (the cell decomposition itself is a specific case of {\bf additively-weighted Voronoi diagram}).

The motivation of this paper is to bound from above the density of packings of spheres of specific sizes.
In this context, it is convenient to introduce the notion of {\bf saturated packing}.
These are the packings such that no further sphere can be inserted (in the holes between spheres).
Clearly, it suffices to bound the density of saturated packings in order to bound the density of any other packing (with the same sphere sizes).
The advantage is that the tetrahedra which appear in the FM-decomposition of such a packing have nice properties.
This motivates our first key definition (we shall here use it for spheres of size $1$ and $r=\sqrt{2}-1$ in $\mathbb{R}^3$ but it can be extended in any dimension and for any fixed sizes of spheres):

\begin{definition}
\label{def:fm_tetra}
We call {\bf FM-tetrahedron} any tetrahedron which may appear in the FM-decomposition of a saturated packing of spheres of size $1$ and $r$.
\end{definition}

Implicitly, an FM-tetrahedron has a sphere centered in every vertex (the sphere in the packing it comes from).
These four spheres will be referred to as {\em its} spheres.
Recall from the introduction that the {\bf type} of an FM-tetrahedron is the quadruple formed by the radii of these spheres.
Assuming that these radii are sorted by non-increasing order, this yields $5$ possible types, illustrated in Fig.~\ref{fig:densest_tetrahedra}.

By construction, when several spheres have Voronoi cells which meet in a point, this point is at equal distance from these spheres.
In other words, these spheres are tangent to a new sphere (not in the packing) which is centered at this meeting point and interior disjoint from the spheres in the packing.
This motivates our second key definition:

\begin{definition}
\label{def:support_sphere}
We call {\bf support sphere} of an FM-tetrahedron a sphere which is externally tangent to its four spheres.
\end{definition}

Since, by definition, an FM-tetrahedron can appear in a packing, it always admits a support sphere.
It is usually unique, but there may be some degenerated cases.
For example, if the four spheres are centered on the vertices of a square of size $2$ (the FM-tetrahedron is flat), then any point on the line which goes perpendicularly through the center of this square is the center of a sphere tangent to the four unit spheres.
However, the saturation hypothesis ensures:

\begin{proposition}
\label{prop:bounded_radius}
%A support sphere of an FM-tetrahedra has radius less than $r$.
Support spheres of FM-tetrahedra have radii less than $r$.
\end{proposition}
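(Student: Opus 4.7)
The plan is to use the saturation hypothesis directly: if the support sphere had radius at least $r$, one could insert an extra sphere of radius $r$ centered at its center, which would contradict saturation. So let $T$ be an FM-tetrahedron coming from a saturated packing $\mathcal{P}$ of spheres of sizes $1$ and $r$, and let $S$ be a support sphere of $T$ with center $c$ and radius $\rho$; I aim to show $\rho<r$.

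The first step is the key geometric property: the open interior of $S$ is disjoint from every sphere of $\mathcal{P}$. For the four spheres of $T$ itself, this is just the external tangency required in Definition~\ref{def:support_sphere}. For every other sphere of $\mathcal{P}$, I would invoke the definition of the cell decomposition: since $T$ arises in the FM-decomposition, the point $c$ lies in the closure of the cells of the four spheres of $T$, i.e.\ it is (weakly) closer to each of them than to any other sphere of the packing. Unpacking ``closer'' in the additively-weighted sense, this reads $\|c-c'\|-r'\geq\rho$ for every other sphere of $\mathcal{P}$ with center $c'$ and radius $r'$, which rearranges to $\|c-c'\|\geq\rho+r'$, exactly the statement that $S$ and that sphere have disjoint interiors.

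Once the first step is in hand, the conclusion is immediate: if $\rho\geq r$, then the closed ball of radius $r$ around $c$ sits inside $S$ and therefore has interior disjoint from every sphere of $\mathcal{P}$, so it can be added to $\mathcal{P}$ as a new sphere of radius $r$, contradicting saturation. Hence $\rho<r$.

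The main obstacle I anticipate is the non-generic case where more than four cells meet at a single point and the corresponding polyhedron is subdivided arbitrarily into FM-tetrahedra: one must be sure that the ``support sphere'' considered for the proposition is the canonical one (the sphere centered at the common meeting point and tangent to all spheres whose cells contain it), so that the inequality $\|c-c'\|-r'\geq\rho$ really does hold for every other sphere of the packing. This is handled by noting that the shared meeting point is a support-sphere center for each of the subdividing tetrahedra, and that any alternative support sphere arising from the degeneracy (for instance those lying along a line of tangent spheres in the flat-square example mentioned after Definition~\ref{def:support_sphere}) would itself allow an extra small sphere to be inserted and thus be ruled out by the very same saturation argument.
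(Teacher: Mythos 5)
Your proposal is correct and follows essentially the same route as the paper: use the fact that the support sphere (as it arises from the additively-weighted Voronoi construction) is interior-disjoint from every sphere of the packing, so a radius of at least $r$ would let you insert a new sphere of radius $r$ at its center, contradicting saturation. The paper's own proof is two sentences and simply asserts the interior-disjointness; your version spells out why that holds via the cell-decomposition inequality $\|c-c'\|\geq\rho+r'$ and is slightly more careful about centering a ball of radius exactly $r$ inside $S$, but there is no substantive difference in approach.
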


\begin{proof}
Since a support sphere is interior disjoint from the spheres in the packing, one can insert a further sphere of that size into the packing.
If the radius is $r$ or more it contradicts the saturation hypothesis.
\end{proof}

For the sake of simplicity, we will speak about {\em the} radius of {\em the} support sphere of an FM-tetrahedron, having in mind the radius of the largest possible support sphere in the (rare) cases where more than one such sphere exists.
A straightforward consequence is that the set of FM-tetrahedra, seen as $6$-tuple of their edge lengths, is a bounded subset of $\mathbb{R}^6$:

\begin{proposition}
\label{prop:bounded_edge}
The edge between two vertices $A$ and $B$ of an FM-tetrahedron has length at most $r_A+r_B+2r$, where $r_X$ denotes the radius of the sphere centered in a vertex $X$.
\end{proposition}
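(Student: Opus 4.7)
The plan is to leverage the support sphere, whose existence and bounded radius are guaranteed by the preceding results. Given an FM-tetrahedron with two vertices $A$ and $B$, I would pick a support sphere (Definition~\ref{def:support_sphere}); such a sphere exists because, by Definition~\ref{def:fm_tetra}, the tetrahedron comes from a saturated packing. Call its center $S$ and its radius $\rho$.

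By the external tangency condition defining a support sphere, $|SA|=r_A+\rho$ and $|SB|=r_B+\rho$. The triangle inequality applied to the triple $(A,S,B)$ then yields
\[
|AB|\le |SA|+|SB| = r_A+r_B+2\rho.
\]
Invoking Proposition~\ref{prop:bounded_radius} to bound $\rho$ by $r$ immediately gives $|AB|\le r_A+r_B+2r$, which is the claim.

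There is no real obstacle in this proof: the statement is essentially a direct geometric consequence of the support-sphere radius bound combined with the triangle inequality. The only minor subtlety worth noting is that the support sphere need not be unique (for example, in degenerate flat configurations the set of admissible centers can be infinite); however, the argument is valid for \emph{any} such sphere, so the convention of taking the largest one, mentioned just before the proposition, plays no role here. As a sanity check, the bound is geometrically sharp in the limiting case where $S$ lies on the segment $AB$ with $\rho$ approaching $r$, which confirms that one cannot hope to replace the constant $2r$ by anything smaller using this argument alone.
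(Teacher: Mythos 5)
Your proof is correct and is essentially the same as the paper's: both pick a support sphere (of radius $\le r$ by Proposition~\ref{prop:bounded_radius}), note that the distance from its center to each vertex equals the vertex's sphere radius plus the support radius, and apply the triangle inequality. Your remarks on non-uniqueness and sharpness are sensible additions but do not change the argument.
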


\begin{proof}
The tetrahedron has a support sphere of radius $R\leq r$ and center $O$.
The length $AB$ is at most the sum of $AO=r_A+R$ and $OB=R+r_B$ (triangular inequality).
The claim follows.
\end{proof}

Hence, every FM-tetrahedron of type $r_Ar_Br_Cr_D$ corresponds to a point $(ab,ac,ad,bc,bd,cd)$ in the rectangular parallelepiped defined by the product of $[r_X+r_Y,r_X+r_Y+2r]$ for $XY$ in $\{AB,AC,AD,BC,BD,CD\}$.
Conversely, however, not every point in this rectangular parallelepiped corresponds to an FM-tetrahedron.

First, this point has to correspond to a tetrahedron, which is known to be characterized by the two following conditions \cite{DW09}:
\begin{itemize}
\item for $\{x,y,z\}\subset\{a,b,c,d\}$, the triangular inequality $xz\leq xy+yz$ holds, where $xy$ denotes the length of the edge between vertices $X$ and $Y$;
\item the so-called {\em Cayley-Menger} determinant, below, is non-negative:
\[
\left|\begin{array}{cccccc}
0 & ab^2 & ac^2 & ad^2 & 1 \\
ab^2 & 0 & bc^2 & bd^2 & 1 \\
ac^2 & bc^2 & 0 & cd^2 & 1 \\
ad^2 & bd^2 & cd^2 & 0 & 1 \\
1 & 1 & 1 & 1 & 0
\end{array}\right|
\]
\end{itemize}

Second, when the point corresponds to a tetrahedron, there must exist a sphere of radius at most $r$ that is tangent to each sphere of the tetrahedron (the support sphere).
Expressing this second condition is more technical, we will come back to it in detail a bit later (Section~\ref{sec:formulas}).

In \cite{FM58}, an {\bf FM-triangle} is defined as a triangle with three disjoint interior circles centered on its vertices and such that the following properties, proven to be equivalent, hold:
\begin{enumerate}
\item the (fourth) circle which is interior disjoint to the three previous circles, and tangent to them, is smaller than these three circles.
\item no circle intersects the edge connecting the centers of the two other circles.
\end{enumerate}
Above we generalized the first property to define FM-tetrahedra, but we conjecture that the second can also be generalized, namely:

\begin{conjecture}
\label{conj:sphere_sector}
In an FM-tetrahedron, no sphere intersects the triangle connecting the centers of the three other spheres.
\end{conjecture}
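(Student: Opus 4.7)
The plan is to argue by contradiction, exploiting the small support sphere and reducing to the 2D equivalence proved in \cite{FM58}. Suppose some FM-tetrahedron $ABCD$ has its sphere at $A$ (of radius $r_A$) meeting the triangular face $BCD$; write $\pi$ for the plane of that face and $h_A:=\mathrm{dist}(A,\pi)$, so that $h_A<r_A$ and the intersection $D_A:=\mathrm{ball}(A,r_A)\cap\pi$ is a disk of radius $\sqrt{r_A^2-h_A^2}$ centered at the orthogonal foot $H_A$, which by hypothesis meets the closed triangle $BCD$. The three spheres at $B$, $C$, $D$ slice $\pi$ along their true equators of radii $r_B$, $r_C$, $r_D$, and by Proposition~\ref{prop:bounded_radius} the support sphere of the FM-tetrahedron has radius $R<r\le\min(r_B,r_C,r_D)$.

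The first thing I would prove, as an auxiliary lemma, is that the support-sphere center $O$ lies on the same side of $\pi$ as $A$; morally this is because the additively-weighted circumcenter of four pairwise interior-disjoint spheres with strictly smaller support radius should lie in the interior of the tetrahedron spanned by the centers. Granting this, the slice by $\pi$ presents, in a single plane, four pairwise interior-disjoint disks $D_B, D_C, D_D, D_A$ with $D_A$ meeting triangle $BCD$, together with (when $\mathrm{dist}(O,\pi)<R$) a fifth interior-disjoint disk coming from the support sphere. The 2D FM-triangle machinery of \cite{FM58} should then yield a contradiction: the presence of a point of $D_A$ inside triangle $BCD$ would force the disk tangent externally to $D_B,D_C,D_D$ and lying inside that triangle to be larger than $\min(r_B,r_C,r_D)$, which via the cross-section of the support sphere contradicts $R<r$.

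The main obstacle is that $D_A$ is not an equator but a sub-equatorial slice whose radius can be much smaller than $r_A$, so the clean 2D argument of \cite{FM58} does not transfer verbatim; one must genuinely combine the 3D constraints $h_A<r_A$ and $R<r$ via Cayley--Menger determinants, and the boundary case where $H_A$ falls outside triangle $BCD$ (so that $D_A$ meets the triangle only near one of its edges) has to be handled separately. If this direct geometric route turns out to resist a by-hand proof---which, given the precedent of the 5-variable analysis required for Theorem~\ref{th:main}, is quite likely---then the fallback is to observe that $R$, $h_A$ and the position of $H_A$ relative to triangle $BCD$ are algebraic functions of the six edge lengths, so the conjecture reduces to a polynomial inequality on the compact 6-dimensional parameter space of each tetrahedron type, and can be verified by the same interval-arithmetic subdivision method developed in Section~\ref{sec:computer} for Theorem~\ref{th:main}.
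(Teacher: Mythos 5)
Note first that this statement is labelled a \emph{Conjecture} in the paper for a reason: the authors do not prove it. They offer only an informal remark for the degenerate case (if the sphere at $A$ is tangent to the plane of $BCD$, its mirror image across that plane is a support sphere of radius $r_A\geq r$, contradicting Proposition~\ref{prop:bounded_radius}) and then say explicitly that the monotonicity needed to push away from that limit ``may be hard to formalize.'' What the paper actually proves, in Appendix~\ref{sec:proof_FM_faces}, is the strictly weaker Proposition~\ref{prop:FM_faces} (each face of an FM-tetrahedron is an FM-triangle), which is all that Lemma~\ref{lem:sliding}---and hence Theorem~\ref{th:main}---needs. That appendix argument is purely algebraic: it writes the radius of a sphere tangent to the three face spheres as the smallest positive root $R_+(z)$ of a quadratic whose constant term $c(z)$ depends on the distance $z$ of the tangent sphere's center to the face plane, proves $c(z)<0$ and $R_+'(z)\geq 0$ by explicit polynomial estimates, and then lets $z$ decrease from the support-sphere value to $0$.

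So there is no ``paper proof'' to match, and as written your attempt does not close the gap the authors left open. Two concrete problems. First, your auxiliary lemma---that the support-sphere center $O$ lies on the same side of $\pi=\mathrm{plane}(BCD)$ as $A$---is asserted on the ``moral'' grounds that $O$ should be interior to the tetrahedron, but the additively-weighted circumcenter of a simplex need not lie inside it (this is exactly what happens for the sliver-type tetrahedra around Fig.~\ref{fig:homothety}); the claim would require its own proof, presumably using $h_A<r_A$. Second, the 2D reduction has a more basic flaw than the ``not verbatim'' caveat you raise: the support sphere is tangent to the \emph{spheres} at $B,C,D$, not to their equatorial disks in $\pi$, so the cross-section of the support sphere by $\pi$ is interior-disjoint from $D_B,D_C,D_D$ but \emph{not} tangent to them, and its radius $\sqrt{R^2-\mathrm{dist}(O,\pi)^2}$ does not control the radius of the in-plane tangent circle to $D_B,D_C,D_D$. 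Moreover the FM-triangle equivalence of~\cite{FM58} is a statement about the three disks of the triangle intersecting their opposite edges; it says nothing directly about a fourth disk $D_A$ meeting the triangle. Relating the 3D support radius to the 2D tangent radius as a function of $z$ is precisely the content the appendix proof of Proposition~\ref{prop:FM_faces} works hard to establish, so your route would effectively have to redo and then strengthen that computation; the interval-arithmetic fallback you mention is indeed the realistic option, but it would be a new verification, not a consequence of anything already in the paper.
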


Indeed, consider the limit case of a sphere centered in $A$ which is tangent to the three other spheres as well as to the face $BCD$.
Then, the mirror image of the sphere centered in $A$ with respect to the face $BCD$ is tangent to the four spheres of the FM-tetrahedron: it is a support sphere.
Since its radius is $r_A\geq r$, it contradicts Prop.~\ref{prop:bounded_radius}.
Now, intuitively, if the spheres in $B$, $C$ or $D$ are moved apart and the sphere in $A$ moved to enter the face $BCD$, then the radius of the support sphere only increases.
However, this intuition may be hard to formalize (this is already not trivial for FM-triangles in \cite{FM58}).
In this paper, the following weaker property will be sufficient for our purpose (it is used in the proof of Lemma~\ref{lem:sliding}).
A formal proof is provided in Appendix~\ref{sec:proof_FM_faces} (it is already quite complicated).

\begin{proposition}
\label{prop:FM_faces}
Every face of an FM-tetrahedron is an FM-triangle.
\end{proposition}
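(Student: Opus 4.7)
The plan is to reduce the claim to Proposition~\ref{prop:bounded_radius} by comparing two particular spheres externally tangent to the three spheres $S_B,S_C,S_D$ carried by the vertices of a chosen face $BCD$: the inner Soddy sphere of the face (which lies in the plane $BCD$) and the support sphere of the FM-tetrahedron itself.

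Since $S_B,S_C,S_D$ are interior-disjoint and their centers $B,C,D$ lie in the plane of the face, the three disks they cut in that plane are pairwise interior-disjoint. Denote by $C^*$ the unique circle in the plane $BCD$ externally tangent to these three disks (the inner Soddy circle), with center $P$ and radius $\rho>0$. The sphere of radius $\rho$ centered at $P$ is then externally tangent to $S_B,S_C,S_D$ in $\mathbb R^3$ since the three vertices lie in the plane. By the first definition of FM-triangle, it suffices to show $\rho<\min(r_B,r_C,r_D)$; since every $r_V$ belongs to $\{1,r\}$, one has $r\le\min(r_B,r_C,r_D)$, so it is enough to prove $\rho<r$.

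The key step is then to show that $\rho$ is the \emph{minimum} radius over all spheres in $\mathbb R^3$ externally tangent to $S_B,S_C,S_D$. Fixing a radius $\rho'>0$, the locus of centres of such spheres is the intersection of the three spheres $\{X:|X-V|=\rho'+r_V\}$ for $V\in\{B,C,D\}$, which, when non-empty, is a pair of points symmetric across the plane $BCD$. Subtracting the three tangency equations forces the projection $X'$ of any such center onto the plane to depend affinely on $\rho'$, and substituting back yields a quadratic relation $h^2=A\rho'^2+B\rho'+C$ between the squared height $h^2$ and $\rho'$, whose only positive root corresponds to $h=0$ and $X'=P$, i.e.\ to $\rho'=\rho$. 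Thus the intersection is empty for $0<\rho'<\rho$, reduces to the single point $P$ at $\rho'=\rho$, and consists of two symmetric points for $\rho'>\rho$, so every sphere externally tangent to $S_B,S_C,S_D$ has radius at least $\rho$.

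The support sphere of the FM-tetrahedron is such a sphere, so its radius $R$ satisfies $R\ge\rho$; by Proposition~\ref{prop:bounded_radius}, $R<r$, hence $\rho\le R<r\le\min(r_B,r_C,r_D)$, and the face $BCD$ is indeed an FM-triangle. Since the face was arbitrary, the conclusion follows. The main obstacle is the quadratic analysis: one must verify that, among the (at most two) roots of the quadratic $q(\rho')=h^2$ coming from the in-plane tangent circles, only $\rho$ corresponds to a positive-radius externally tangent configuration, so that the valid range of radii is exactly $[\rho,\infty)$. A variational argument provides an alternative: along the one-dimensional equidistant curve in $\mathbb R^3$, the common weighted distance $\rho'=|X-V|-r_V$ has a critical point only where the three vectors $V-X$ become linearly dependent, which happens precisely when $X$ lies in the plane $BCD$, and there $X=P$, $\rho'=\rho$.
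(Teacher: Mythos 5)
Your overall strategy is essentially a reparametrization of the paper's: both arguments reduce the claim to showing that the in-plane Soddy radius $\rho$ is bounded above by the radius $R$ of the support sphere, which is then controlled by Proposition~\ref{prop:bounded_radius}. The paper phrases this by holding the radius $R$ as the unknown of a quadratic $P(R)=aR^2+bR+c(z)$ with the height $z$ as a parameter and showing (Lemma~\ref{lem:increasing_radius}) that the smallest positive root is non-decreasing in $z$; you invert the roles and regard $h^2$ as a quadratic in the radius $\rho'$. The two points of view are equivalent and both hinge on the same sign information, which is precisely where your sketch has a genuine gap.

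You assert that the quadratic $h^2=A\rho'^2+B\rho'+C$ has a \emph{unique} positive root $\rho$, and that consequently the intersection is empty for $0<\rho'<\rho$ and non-empty for $\rho'>\rho$. Both claims can fail: the leading coefficient $A$ (which is $a/K$ in the paper's notation, $K>0$ coming from $c(z)=c(0)-Kz^2$) need not be positive. When $A<0$ the parabola opens downward, and since $C<0$ the product of roots $C/A$ is positive, so \emph{both} roots are positive; the set of admissible radii is then a bounded interval $[\rho_1,\rho_2]$ and the "two symmetric points for all $\rho'>\rho$" picture is wrong. The paper anticipates exactly this: Lemma~\ref{lem:increasing_radius} splits into the cases $a>0$ and $a<0$ and verifies in the latter that both roots are non-negative. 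Your conclusion $R\geq\rho$ does survive this correction, but only because $C<0$ holds and because $\rho$ is defined to be the \emph{smallest} positive root; neither of these is established in your sketch. Proving $C<0$ (equivalently $c(0)<0$) is in fact the technical heart of the paper's argument: Lemma~\ref{lem:negative_C} decomposes $c(0)$ into five individually negative summands, a step found with computer assistance and not at all obvious. The existence of the in-plane Soddy circle (i.e.\ $\Delta(0)\geq0$) is likewise something the paper verifies explicitly and you take for granted. Finally, the "variational argument" you offer as an alternative shows only that $\rho'$ has a critical point in the plane, not that it is a global minimum rather than a maximum or a saddle, so it does not repair the gap.
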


%In particular, this ensures that the density of an FM-tetrahedron, defined as the proportion of its volume covered by its spheres, can be computed from its volume, its solid angles and the radii of its spheres.

%%%%%%%%%%%%%%%%%%%%%%%%%%%%%%
\subsection{Formulas}
\label{sec:formulas}

We discuss here how to compute the density and the radius of the support sphere of a tetrahedron $T$ given by its edge lengths $(ab,ac,ad,bc,bd,cd)$.

Denote by $\widehat{X}$ the solid angle at a vertex $X$ of a tetrahedron $T$, i.e., the area of the radial projection from $X$ of the face of $T$ that does not contain $X$.
Then the total volume of $T$ covered by its spheres, denoted by $\textrm{cov}(T)$, is at most
\[
\sum_{X\textrm{ vertex of }T} \frac{1}{3}r_X^3\widehat{X}.
\]
This is actually an equality if Conjecture~\ref{conj:sphere_sector} holds.
If we denote by $\textrm{vol}(T)$ the volume of $T$, the {\bf density} of $T$ is
\[
\delta(T):=\frac{\textrm{cov}(T)}{\textrm{vol}(T)}.
\]
The above-mentioned Cayley-Menger determinant is known to be equal to $288\;\textrm{vol}(T)^2$ and allows to compute $\textrm{vol}(T)$.

In order to compute solid angles, we could use Girard's theorem or L'Huillier formula, but Lagrange's formula proved more effective\footnote{The three formulas are referenced on the Wikipedia page \href{https://en.wikipedia.org/wiki/Solid_angle}{solid angle}.}:
\[
\tan\frac{\widehat{A}}{2}=
\frac{6\times \textrm{vol}(T)}{
ab\times ac\times ad+
\overrightarrow{AB}\cdotp\overrightarrow{AC}\times ad+
\overrightarrow{AB}\cdotp\overrightarrow{AD}\times ac+
\overrightarrow{AC}\cdotp\overrightarrow{AD}\times ab
}.
\]
This formula defines $\widehat{A}/2$ only modulo $\pi$, but it is sufficient to find $\widehat{A}$ because we known that $\widehat{A}$ belongs to $[0,2\pi]$.
Moreover, when the right hand side is non-negative, the inequality $\arctan(x)\leq x$ for $x\geq 0$ yields
\[
\frac{\widehat{A}}{2}\leq 
\frac{6\times \textrm{vol}(T)}{
ab\times ac\times ad+
\overrightarrow{AB}\cdotp\overrightarrow{AC}\times ad+
\overrightarrow{AB}\cdotp\overrightarrow{AD}\times ac+
\overrightarrow{AC}\cdotp\overrightarrow{AD}\times ab
}.
\]
That upper bound proved to be very useful for bounding the density when the volume is small.
Indeed, the density is a weighted sum of solid angles divided by the volume.
Using the above formula then simplifies the numerator and denominator by the volume.

The above is the basis for our way of calculating density, which is implemented in the file \verb+routines.cpp+.
The exact formula used is more complex, however, and will be discussed in greater detail in Section \ref{sec:formulas_opt} (in particular, we shall explain the purpose of the file \verb+routines_order1.cpp+).

Let us now consider the computation of the radius of the support sphere.

\begin{proposition}
\label{prop:radius}
The radius $R$ of the support sphere of a tetrahedron is a root of a quadratic polynomial $P=aR^2+bR+c$, where $a$, $b$ and $c$ depend on the edge lengths of this tetrahedron.
\end{proposition}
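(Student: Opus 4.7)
The plan is to translate the four geometric conditions defining the support sphere into a system that can be eliminated down to a single quadratic equation in $R$. Let $O$ denote the center of the support sphere and $r_A,r_B,r_C,r_D$ the radii of the four spheres at the vertices. External tangency gives the four relations $|OX|^2=(R+r_X)^2$ for $X\in\{A,B,C,D\}$, and the point of the proof is that after eliminating the unknown position of $O$ we are left with a quadratic polynomial in $R$ whose coefficients depend only on the edge lengths.

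First I would fix the affine frame centered at $A$ with basis $(\overrightarrow{AB},\overrightarrow{AC},\overrightarrow{AD})$, which is a basis whenever the tetrahedron is non-degenerate, and write $\overrightarrow{AO}=\alpha\,\overrightarrow{AB}+\beta\,\overrightarrow{AC}+\gamma\,\overrightarrow{AD}$. The three differences $|OX|^2-|OA|^2$ for $X\in\{B,C,D\}$ each linearize: for instance $|OB|^2-|OA|^2=2\,\overrightarrow{OA}\cdot\overrightarrow{AB}+ab^2$, which by the tangency conditions equals $(R+r_B)^2-(R+r_A)^2=2R(r_B-r_A)+r_B^2-r_A^2$. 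Thus $\overrightarrow{OA}\cdot\overrightarrow{AB}$ is an affine function of $R$ whose coefficients involve only $ab$ and the radii, and similarly for $\overrightarrow{OA}\cdot\overrightarrow{AC}$ and $\overrightarrow{OA}\cdot\overrightarrow{AD}$.

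Next, these three dot products are exactly the right-hand sides of the linear system obtained by taking the dot product of $\overrightarrow{AO}=\alpha\,\overrightarrow{AB}+\beta\,\overrightarrow{AC}+\gamma\,\overrightarrow{AD}$ with the basis vectors. The matrix of this system is the Gram matrix of $(\overrightarrow{AB},\overrightarrow{AC},\overrightarrow{AD})$, whose entries are the pairwise dot products and can be expressed via the polarization identity purely in terms of edge lengths. Non-degeneracy of the tetrahedron makes this Gram matrix invertible, so Cramer's rule yields $\alpha,\beta,\gamma$ as affine functions of $R$ with coefficients rational in the edge lengths.

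Finally, I would substitute into the remaining equation $|\overrightarrow{AO}|^2=(R+r_A)^2$. Expanding the left-hand side as $\alpha\,\overrightarrow{OA}\cdot\overrightarrow{AB}+\beta\,\overrightarrow{OA}\cdot\overrightarrow{AC}+\gamma\,\overrightarrow{OA}\cdot\overrightarrow{AD}$, one gets a sum of products of two affine functions of $R$, hence a polynomial of degree at most $2$ in $R$; the right-hand side is of course quadratic in $R$. Moving everything to one side produces $aR^2+bR+c=0$ with coefficients depending only on the edge lengths and the fixed radii, which is the claim. The only real obstacle is purely algebraic bookkeeping — in particular checking that the cubic term in $R$ really does cancel, which it must because both the left and right sides arise from quadratic expressions in $R$ after the linear elimination; there is no conceptual difficulty, and the explicit formulas for $a,b,c$ can then be read off for use in the computer implementation.
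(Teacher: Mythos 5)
Your proposal is correct and follows essentially the same elimination as the paper: the paper sets up Cartesian coordinates with an isometry normalization, writes the 10 quadratic equations (6 edge lengths, 4 tangency conditions), solves for the vertex coordinates, then notes that the resulting ``system of 3 linear equations in $R$ and the coordinates of $O$'' (obtained by differencing the tangency conditions, exactly as you do) determines $O$ affinely in $R$, and substitutes back into the remaining quadratic equation. Your coordinate-free version using the Gram matrix and Cramer's rule is the same argument phrased intrinsically, and makes explicit the key observation — that subtracting two tangency relations kills the $|OA|^2$ and $R^2$ terms, leaving everything affine — which the paper leaves implicit behind the phrase ``suitable substitutions.'' (Minor typographical note: in a few places you write $\overrightarrow{OA}$ where $\overrightarrow{AO}$ is meant, and the closing remark about a cubic term ``really does cancel'' is unnecessary since, as you yourself observe, both sides are already at most quadratic; but neither affects the validity of the argument.)
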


\begin{proof}
We explicitly compute $P$ in the file \verb+radius.sage+.
The only difficulty is the size of the formula ($P$ turns out to be the sum of $420$ monomials, each of degree $8$), but that is no problem for today's computer algebra software.
Let us here sketch the way we obtain $P$:
\begin{itemize}
\item
We introduce the coordinates of the four vertices A, B, C, D and the center O of the support sphere.
With the radius $R$ of the support sphere this yields $16$ variables.
\item
Up to an isometry, one can assume that $A$ is the origin, $B$ is on the $x$-axis, $C$ is the half-plane $z=0$, $y\geq 0$ and $D$ in the half-space $z\geq 0$.
This fixes $6$ variables, i.e. there are now $10$ variables.
\item
The length of each edge as well as the distance of O to each vertex yield $10$ quadratic equations.
\item
We first determine the coordinates of the four vertices by suitable substitutions.
\item
We then get a system of $3$ linear equations in $R$ and the coordinates of O which allows to express the coordinates of O as functions of $R$.
\item
We substitute these coordinates in one of the equations to get $P$.
\end{itemize}
\end{proof}

%%%%%%%%%%%%%%%%%%%%%%%%%%%%%%
\subsection{An optimization problem}

Basically, our problem amounts to finding the maximum of an explicit function (the density) on a compact set (the FM-tetrahedra).
The difficulty comes from two factors:
\begin{itemize}
\item The objective function is complicated: it is a sum of $\arctan$ of rational fractions in $6$ variables.
\item The set over which we want to maximize is not a polytope: it is a subset of a six-dimensional box (Prop.~\ref{prop:bounded_edge}) delimited by three algebraic surfaces: the positivity of the volume (Cayley-Menger determinant), the existence of a support sphere (the discriminant of the polynomial $P$ of Prop.~\ref{prop:radius} must be non-negative) and a support sphere of radius at most $r$ (Prop.~\ref{prop:bounded_radius}).
Figure~\ref{fig:domain} depicts a 3-dimensional cut of such a domain.
\end{itemize}

We conjecture that the density is convex, but this seems hard to check (analytically or by computer).
Moreover, even if it holds it would not simplify the problem that much, because the domain is not a convex polytope.
Indeed, the condition on the support sphere defines a sort of veil whose points seem to be all extremal (see, again, Figure~\ref{fig:domain}).

\begin{figure}[htbp]
\centering
\includegraphics[width=\textwidth]{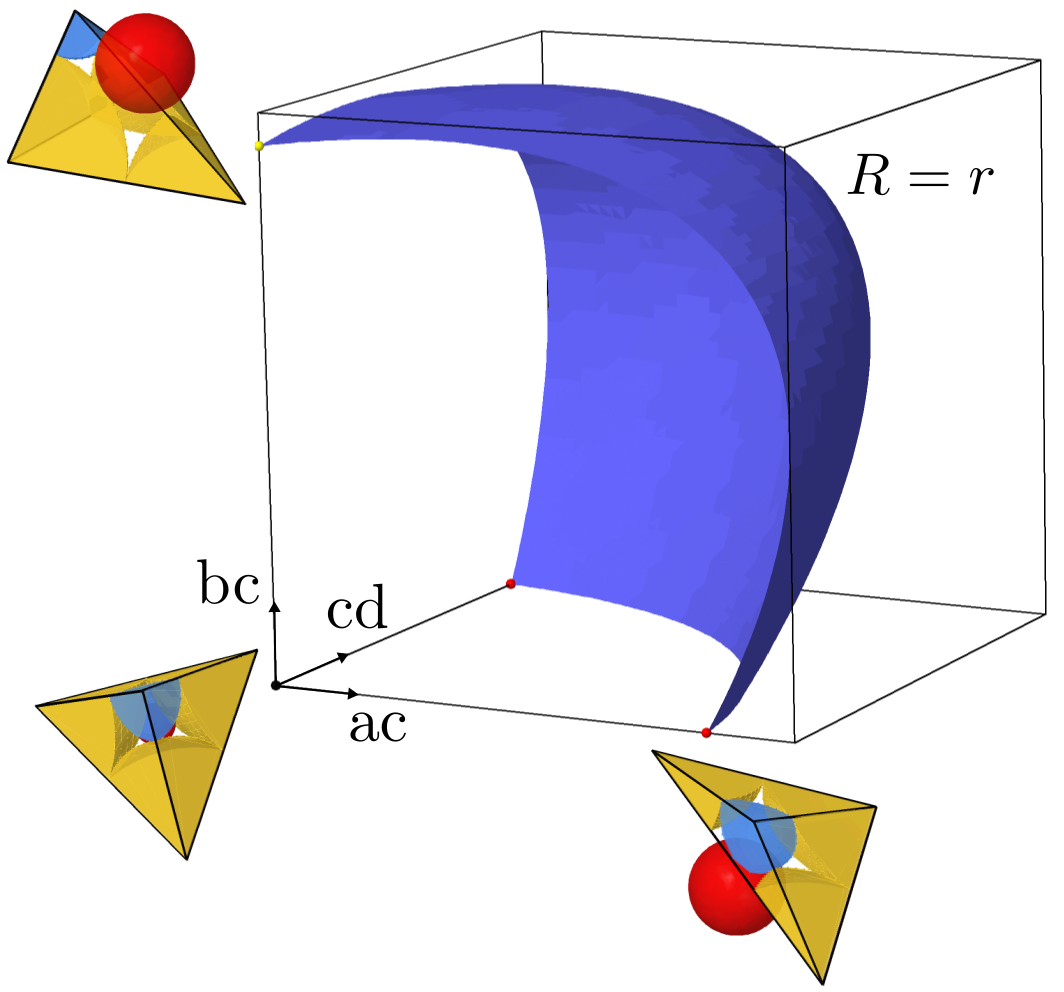}
\caption{
The part of the box below the blue veil is a 3-dim. cut of the set of FM-tetrahedra of type $111r$, namely by the space $(ab,ad,bd)=(2,1+r,1+r)$.
In this case, the positivity of the Cayler-Menger determinant and the existence of a support sphere do not play a role.
The box shows the extreme lengths of $ac$, $bc$ and $cd$ (Prop.~\ref{prop:bounded_edge}) and the blue veil shows the tetrahedra whose support sphere has radius $r$ (Prop.~\ref{prop:bounded_radius}).
Some extreme tetrahedra are depicted: the tight one (black point, bottom left), the ones stretched along an 11-edge (the two red points -- only one tetrahedron is depicted since the other is similar) and the one stretched along the 1r-edge $BC$ (the yellow point). 
}
\label{fig:domain}
\end{figure}

In what follows, we will often speak about tight or stretched FM-tetrahedra.
An FM-tetrahedron is said to be {\bf tight} if its four spheres are pairwise adjacent.
There is thus only one tight tetrahedron of each type.
An FM-tetrahedron is said to be {\bf stretched} if its spheres are pairwise adjacent except along ``some'' edges that have been stretched so that its support sphere reached radius $r$.
Formally, it is thus not an FM-tetrahedron (the radius should be less than $r$) but a limit of FM-tetrahedra.
The meaning of ``some'' edges is informal; most of the time it means only one, and usually we specify which edges have been stretched.
The idea is that these are extreme FM-tetrahedra of the domain, hence good candidates to maximize the density.
Among the densest tetrahedra depicted in Fig.~\ref{fig:densest_tetrahedra}, the upper three ones are tight, the bottom-left one is stretched along an 11-edge while the bottom-right one has two stretched rr-edges (their lengths are characterized by the fact that they are equal).

%%%%%%%%%%%%%%%%%%%%%%%%%%%%%%
\subsection{Towards a 3D Florian's bound?}
 
In order to extend the Florian bound discussed in the introduction to the 3-dimensional case, we would like to determine which FM-tetrahedron is the densest, that is, which one plays the role of the triangle with two small and one large disks that are all pairwise adjacent (Fig.~\ref{fig:intro3}).
Natural candidates are the tight FM-tetrahedra and, maybe, some stretched ones.
Unfortunately, the densest of these tetrahedra varies for different values of $r$, as illustrated in Figure~\ref{fig:florian3D_curves}.

\begin{figure}[htbp]
\includegraphics[width=\textwidth]{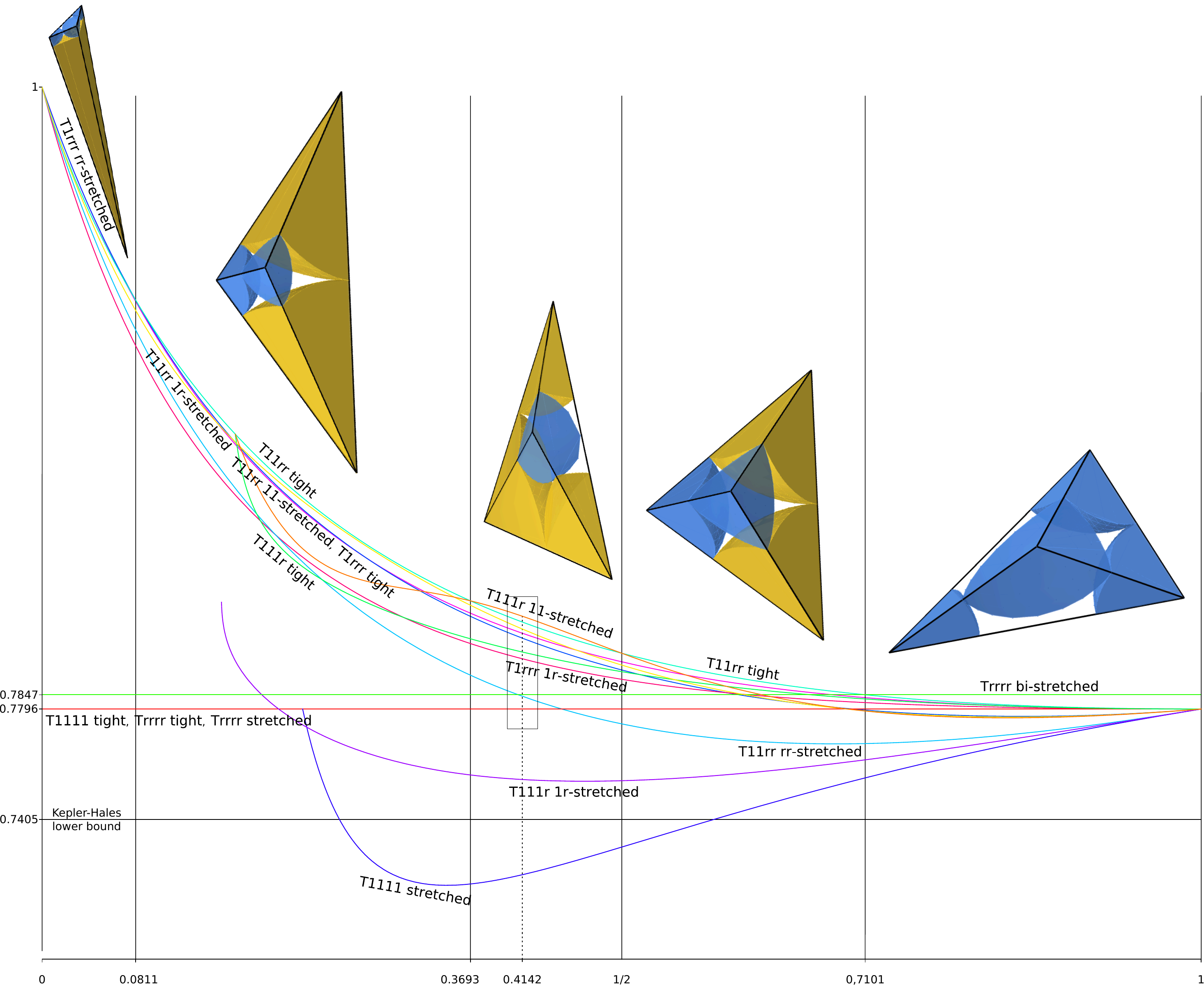}
\caption{
Density of some extremal FM-tetrahedra as a function of $r$.
Which is the densest depends on $r$.
The black frame around $r=0.4142$ is magnified in Fig.~\ref{fig:florian3D_curves_zoom}.
For large $r$, the densest tetrahedron has only $4$ contacts, while two of its edges are equally stretched, so that its support sphere has size $r$ (hence it is also on the boundary of the domain of FM-tetrahedra).
}
\label{fig:florian3D_curves}
\end{figure}

We conjecture that each of the five tetrahedra depicted in Figure~\ref{fig:florian3D_curves} is, for any radius $r$ between the two vertical lines flanking this tetrahedron, the densest among all FM-tetrahedra with spheres of radius $1$ and $r$.
In this paper, we focus on the $r=\sqrt{2}-1$ case, see zoom in Figure~\ref{fig:florian3D_curves_zoom}.

\begin{figure}[htbp]
\centering
\includegraphics[width=\textwidth]{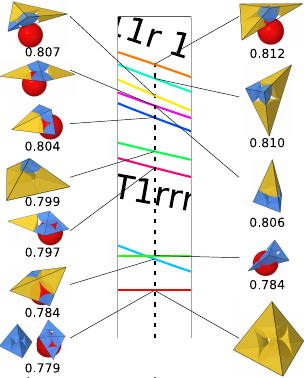}
\caption{
Zoom on the black frame in Fig.~\ref{fig:florian3D_curves}.
The extreme tetrahedra for $r=\sqrt{2}-1$ (vertical dotted line) corresponding to each curve are depicted.
For stretched ones, the support sphere (which has radius $r$) is drawn in red.
Their (rounded) density is reported.
Theorem~\ref{th:main} states that each of the five tetrahedra on the right is the densest among all FM-tetrahedra of the same type.
}
\label{fig:florian3D_curves_zoom}
\end{figure}

%%%%%%%%%%%%%%%%%%%%%%%%%%%%%%%%%%%%%%%%%%%%%%%%%%%%%%%%%%%%%%%%%%%%%%%%%%%%%%%%%%%%%%%%
%%%%%%%%%%%%%%%%%%%%%%%%%%%%%%%%%%%%%%%%%%%%%%%%%%%%%%%%%%%%%%%%%%%%%%%%%%%%%%%%%%%%%%%%
%%%%%%%%%%%%%%%%%%%%%%%%%%%%%%%%%%%%%%%%%%%%%%%%%%%%%%%%%%%%%%%%%%%%%%%%%%%%%%%%%%%%%%%%
\section{Proof strategy}
\label{sec:strategy}

%%%%%%%%%%%%%%%%%%%%%%%%%%%%%%
\subsection{Interval arithmetic}

Real numbers are usually represented on a computer as so-called {\em floats}, which are floating point numbers with a finite precision.
Of course floats cannot represent all real numbers, so there are rounding issues.
Floating point arithmetic is useful to get good approximations, but does not offer rigorous results.

The principle of interval arithmetic is to represent a real number by an interval that contains it and whose endpoints are {\em exactly representable}, that is, floats handled by the computer.
Then, this property is maintained during arithmetic calculations, that is, if $x_1,\ldots,x_k$ are intervals and $f:\mathbb{R}^k\to\mathbb{R}$ is a $k$-ary function, then computing $f(x_1,\ldots,x_k)$ yields an interval which contains at least the set
\[
\{f(y_1,\ldots,y_k)~|~y_1\in x_1,\ldots,y_k\in x_k\}.
\]
Because the function $f$ can be very complicated and the above set may not be an interval whose endpoints are representable on the computer, the computed interval is usually not optimal.
Its diameter depends on the implementation of the used interval arithmetic library.
However, it contains every valid result and this is what allows to perform certified computations to prove theorems.
We illustrate this with SageMath \cite{sage}, a free computer algebra software whose syntax should be clear enough:
\begin{verbatim}
sage: pi=RealDoubleField()(4*arctan(1))
sage: pi
3.141592653589793
sage: sin(pi)
1.2246467991473515e-16
\end{verbatim}
From a mathematical viewpoint, $4\;\arctan{1}$ is equal to the transcendental number $\pi$, whose sine is equal to zero.
However, in floating point arithmetic, $4\;\arctan{1}$ yields an approximation of $\pi$ which is printed \verb+3.141592653589793+ (actually the exact float is slightly different\footnote{Namely, in that case: $3.141592653589793115997963468544185161590576171875$.}, but SageMath prints only an approximation of this approximation).
Then, if we compute the sine of this float, we get another float, that is thus an approximation of the sine of the approximation of $\pi$.
It is rather close to zero but not equal to zero!
Compare with the following lines:

\begin{verbatim}
sage: pi=4*arctan(RealIntervalField()(1))
sage: pi
3.141592653589794?
sage: pi.endpoints()
(3.14159265358979, 3.14159265358980)
sage: sin(pi).endpoints()
(-3.21624529935328e-16, 1.22464679914736e-16)
\end{verbatim}

Here, $4\;\arctan{1}$ is computed using interval arithmetic (e.g. with a Taylor expansion of arctangent).
So we get an interval which contains $\pi$.
SageMath prints an approximation, with a question mark to indicate that the last printed digit is questionable.
One can ask to print the endpoints.
Then, the sine of this interval is an interval which indeed contains $0$.

%%%%%%%%%%%%%%%%%%%%%%%%%%%%%%
\subsection{Checking a strict inequality}
\label{sec:interval_dichotomy}

Usually, we try to keep the intervals as small as possible to get good bounds of the reals we are interested in.
But large intervals can also be interesting when we want to check inequalities over compact sets.
It is a different use of interval arithmetic, with almost the opposite philosophy.

Assume we are given a function $f:\mathbb{R}\to\mathbb{R}$ and a closed real interval $X$ and we want to check whether $f$ is positive on $X$.
If computing $f(X)$ in interval arithmetic yields an interval whose lower bound is positive, then the answer is yes.
On the contrary, if the upper bound of this interval is negative, then the answer is no.
However, if the interval contains $0$, then we cannot conclude.
In this case, we can halve $X$ and check recursively the positivity of $f$ over each half interval.
Indeed, although this is most often implicit, a good interval arithmetic library will return for $f(X)$ an interval whose diameter tends towards $0$ when the diameter of $X$ tends towards $0$.
Hence, provided that $f$ has no zero over $X$, the process will eventually end.
Illustrated with SageMath (\verb+X.bisection()+ splits the interval \verb+X+ in two):

%\begin{table}
\begin{verbatim}
def is_f_positive_over_X(f,X):
    if f(X).lower()>0:
        return True
    elif f(X).upper()<=0:
        return False
    else:
        (X1,X2)=X.bisection()
        return self(f,X1) and self(f,X2)
\end{verbatim}
%\caption{A simple Python function which answers ``True'' or ``False'' depending whether $f$ is positive or negative over $X$.}
%\label{tab:interval_dicho}
%\end{table}

This extends without difficulty to the case of a function $f$ of several variables.
However, note that adding one variable doubles the number of recursive calls, hence computational time issues may appear.

%%%%%%%%%%%%%%%%%%%%%%%%%%%%%%
\subsection{Optimal points}

The above approach allows us to prove a strict inequality $f>0$ over a set $X$.
But if $f$ admits a zero $x_0\in X$, then $x_0$ will stay indefinitely in the interior of an interval (no matter how much it has been subdivided) and applying $f$ to this interval will yield an interval with $0$ in its interior (this happens e.g. if $x_0$ is not exactly representable on the computer).
In such a case, the above program will run forever in ever-smaller intervals.

To get around this problem, just go one step further.
Namely, assuming that $f$ is differentiable and $X$ is convex, the mean value theorem states that for any $x\in X$, there exists $y$ on the segment from $x_0$ to $x$ such that
\[
f(x)=f(x_0)+(x-x_0)f'(y).
\]
Hence, if we can find $\varepsilon>0$ such that the interval $f'([x_0,x_0+\varepsilon])$ is positive, then this ensures $f(x)\geq f(x_0)$ on $[x_0,x_0+\varepsilon]$.
If we can also proceed similarly on $[x_0-\varepsilon,x_0]$, then we get an explicit neighborhood of $x_0$ on which $f$ is non-negative.
The above recursive checking is then modified as follows: whenever we get a set included in $[x_0-\varepsilon,x_0+\varepsilon]$, we answer that $f$ is non-negative and stop the recursion.
Note that it is essential to have calculated an explicit neighborhood and not just to have shown that $f$ admits a local minimum at $x_0$.

Again, this extends without difficulty to the case of a function $f$ of several variables.
In order to prove $f(x)\geq f(x_0)$ over $B=x_0+[0,\varepsilon]^d$,
we show that $\nabla f(B)$ is positive (that is, every entry is a positive interval).
Indeed, for $x\in B$, $x\neq x_0$, the mean value theorem for $g:t\mapsto f((1-t)x_0+tx)$ ensures the existence of $t\in(0,1)$ s.t.
\[
f(x)-f(x_0)=g(1)-g(0)=g'(t)=\nabla f((1-t)x_0+tx)\cdotp (x-x_0).
\]
Since $\nabla f((1-t)x_0+tx)$ is positive because $(1-t)x_0+tx\in B$ and $x-x_0$ is non-negative and non-zero because of the shape of $B$, this yields $f(x)>f(x_0)$.

%%%%%%%%%%%%%%%%%%%%%%%%%%%%%%%%%%%%%%%%%%%%%%%%%%%%%%%%%%%%%%%%%%%%%%%%%%%%%%%%%%%%%%%%
%%%%%%%%%%%%%%%%%%%%%%%%%%%%%%%%%%%%%%%%%%%%%%%%%%%%%%%%%%%%%%%%%%%%%%%%%%%%%%%%%%%%%%%%
%%%%%%%%%%%%%%%%%%%%%%%%%%%%%%%%%%%%%%%%%%%%%%%%%%%%%%%%%%%%%%%%%%%%%%%%%%%%%%%%%%%%%%%%

\section{Near optimal tetrahedra}
\label{sec:epsilon}

Let us denote by $T^*_t$ the tetrahedron of type $t$ claimed in Theorem~\ref{th:main} to maximize the density.
This is thus a point in $\mathbb{R}^6$ (the length of the six edges).
This section provides, for each type $t$, an explicit neighborhood of $T_t^*$ over which $T_t^*$ indeed maximizes the density.
All the computations are detailed in the file \verb+local.sage+.

%%%%%%%%%%%%%%%%%%%%%%%%%%%%%%
\subsection{Tight tetrahedra}
\label{sec:tights}

For the tight tetrahedra (types $1111$, $11rr$ and $1rrr$), we look for a neighborhood\footnote{Strictly speaking, this is the intersection of a neighborhood of the tight tetrahedron with the definition domain of the tetrahedra. For the sake of simplicity, we will nevertheless speak about a neighborhood.} $B_\epsilon:=[0,\varepsilon]^6$.
Indeed, the edges cannot be shortened because spheres cannot intersect.
Since all the partial derivatives of the density turn out to be negative for each of these tight tetrahedra, such a neighborhood exists.
To find a suitable $\varepsilon$, we use interval arithmetic to compute the vector $\nabla\delta(T_t^*+B_\varepsilon)$ for $t\in\{1111,11rr,1rrr\}$ and check that it is negative.
The mean value theorem then ensures that the density over $T_t^*+B_\varepsilon$ is maximal in $T_t^*$.
For the sake of simplicity, here and in the following we will always look for the largest $\varepsilon$ of the form $1/k$, $k\in\mathbb{N}$.
The values for the tight types are reported in the first three columns of Table~\ref{tab:epsilon}.

\begin{table}[htbp]
\centering
\begin{tabular}{|c|c|c|c|c|c|}
\hline
type & $1111$ & $11rr$ & $1rrr$ & $111r$ & $rrrr$\\
\hline
$\varepsilon$ & $1/46$ & $1/203$ & $1/148$ & $1/686$ & $1/1693$\\
\hline
\end{tabular}
\caption{
Values of $\varepsilon$, for each type $t$, such that no FM-tetrahedron of type $t$ whose edge lengths are within $\varepsilon$ from those of $T_t^*$ is denser than $T_t^*$.
}
\label{tab:epsilon}
\end{table}

\begin{figure}[htbp]
\includegraphics[width=\textwidth]{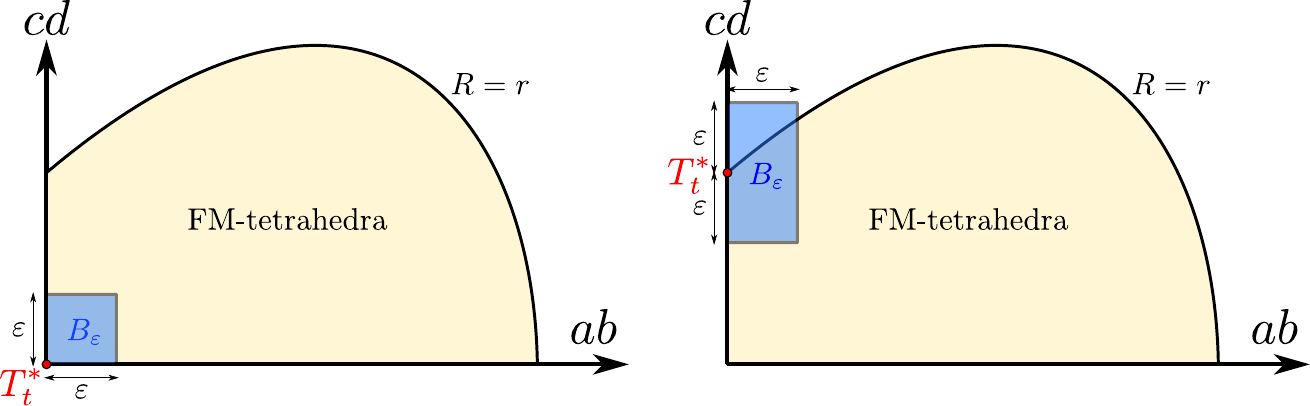}
\caption{
The set of FM-tetrahedra is delimited by the surface $R=r$, where $R$ is the radius of the support sphere, and the six hyperplanes ``the edge length between two spheres is at least the sum of the radii of these spheres'' ($ab$ denotes the length of the edge $AB$).
This set is depicted here, using only two variables for the sake of clarity (compare also with Fig.~\ref{fig:domain}).
The three tight tetrahedra are at the intersection of the six hyperplanes (left).
The two stretched ones are at the intersection of the surface $R=r$ with $4$ (type $rrrr$) or $5$ (type $111r$) hyperplanes (right).
In this latter case, there are tetrahedra in $T_t^*+B_\varepsilon$ which are denser than $T_t^*$, but they are not FM-tetrahedra!
}
\label{fig:local_maxima}
\end{figure}

%%%%%%%%%%%%%%%%%%%%%%%%%%%%%%
\subsection{The stretched tetrahedron $T_{111r}^*$}
\label{sec:T111r}

For the stretched tetrahedron of type $111r$, $T_{111r}^*$, the stretched edge - say cd - can be either lengthened or shortened (Fig.~\ref{fig:local_maxima}, right).
To prove that the density has a local maximum in $T_{111r}^*$, we thus look for $\varepsilon>0$ such that
\begin{itemize}
\item on $B^+_\epsilon:=[0,\varepsilon]^5\times[0,\varepsilon]$, all the partial derivatives of the density are negative;
\item on $B^-_\epsilon:=[0,\varepsilon]^5\times[-\varepsilon,0]$, the first five partial derivatives of the density are negative while the last one, along cd, is positive.
\end{itemize}
The same approach as for tight tetrahedra shows that $T_{111r}^*$ maximizes the density over $B^-_\epsilon$ for $\varepsilon=1/2225$.

Sadly, this fails over $B^+_\epsilon:=[0,\varepsilon]^5\times[0,\varepsilon]$, which turns out to contain tetrahedra which are denser than $T_{111r}^*$.
However, these tetrahedra may be not FM-tetrahedra, because $B_\epsilon$ intersects the surface $R=r$ that bounds the set of FM-tetrahedra we are interested in.
Since it is hard to study the density over the intersection of $B^+_\epsilon$ and $R\leq r$, we shall use the method of {\em Lagrange multipliers}.
Namely, we introduce the function
\[
f:=\delta-\alpha R,
\]
where $\alpha>0$ is the Lagrange multiplier (that we shall choose carefully later).
Then, for any FM-tetrahedron $T$:
\[
f(T_{111r}^*)\geq f(T)
\Leftrightarrow
\delta(T_{111r}^*)-\delta(T)\geq \alpha ( \underbrace{R(T_{111r}^*)}_{=r}-\underbrace{R(T)}_{\leq r})\geq 0.
\]
We therefore reduce to showing that $f$ is maximal at $T_{111r}^*$ over $B^+_\epsilon$.
The benefit is that the Lagrange multiplier will ``penalize'' tetrahedra in $B_\varepsilon$ with $R(T)>r$, especially those denser than $T_{111r}^*$ which were problematic above.

Computations shows that for $\alpha=1.1$ and $\varepsilon=1/9940$, the partial derivatives of $f$ are all non-negative on $B^+_\varepsilon$.
This proves that $f$ is maximal at $T_{111r}^*$ on $B_\varepsilon$, hence $\delta$ is maximal at $T_{111r}^*$ on the FM-tetrahedra of $B^+_\varepsilon$.

%%%%%%%%%%%%%%%%%%%%%%%%%%%%%%
\subsection{The stretched tetrahedron $T_{111r}^*$ again}
\label{sec:T111r_order2}

The value $\varepsilon=1/9940$ obtained in the previous section is quite small.
Let us here show how we can improve it.
This is actually not really necessary for $T_{111r}$, but this will be for $T_{rrrr}$ in the next section.
The idea is simply to use Taylor-Lagrange one order further, that is, at order two in $T^*_{111r}$, seen as a point in $\mathbb{R}^6$:
\[
f(T)=f(T_{111r}^*)+\Delta f(T_{111r}^*)\cdotp h + \tfrac{1}{2}h\cdotp H(T') \cdotp h^\mathrm{T},
\]
where $h:=T-T^*_{111r}$, $T'$ is a point on the segment joining the two points $T$ and $T^*_{111r}$ and $H$ is the Hessian of $f$, that is, the square matrix whose entries are the second-order partial derivatives of $f$.

By replacing $h$ by $[0,\varepsilon]^6$. and $T'$ by $B^+_\varepsilon$ in the above formula, we get an interval which contains $f(T)-f(T_{111r}^*)$ for any $T\in B^+_\varepsilon$.
Unfortunately, the second order terms have coefficients that may be positive and thus yield an interval whose upper bound is positive.
Hence, even if $\Delta f(T_{111r}^*)\cdotp h$ yields an interval whose upper bound is $0$, the sum over first and second order terms yields an interval which contains zero (in its interior) and we cannot conclude.
We need to be slightly more careful.

{\bf Fix an arbitrary value $\varepsilon>0$} (we shall explain how at the end).
Replace $H(T')$ by $H(B^+_\varepsilon)$ in the above formula (as in the previous attempt).
But now, write $h=(h_1,\ldots,h_6)$ and compute $f(T)-f(T_{111r}^*)$ as a function of the $h_i$'s.
This yields a polynomial $z$ with monomials in $h_i$ which come from the first order and have non-positive coefficients, and monomials in $h_ih_j$ which come from the second order and may have arbitrary coefficients.
We will group a term in $h_ih_j$ with the term in $h_i$ (or $h_j$), so that the non-positive coefficient of $h_i$ overcomes, for $h_j$ small enough, the coefficient of $h_ih_j$.
Let us explain this on a toy example with only two variables:
\[
z:=2h_1^2+3h_1h_2+4h_2^2-h_1-2h_2.
\]
A short computation shows that replacing $h_1$ and $h_2$ by $[0,\varepsilon]$ in the above expression yields $z=[-3\varepsilon, 2\varepsilon+7\varepsilon^2]$, which contains $0$.
Instead, rewrite
\[
z=h_1(2h_1+3h_2-1)+h_2(4h_2-2).
\]
Now, replacing $h_1$ and $h_2$ by $[0,\varepsilon]$ in the coefficients $(2h_1+3h_2-1)$ of $h_1$ and $(4h_2-2)$ of $h_2$ respectively yields $[-1,7\varepsilon-1]$ and $[-2,4\varepsilon-2]$, which are both non-positive for $\varepsilon$ as large as $1/7$.
Since $h_1$ and $h_2$ are non-negative, this ensures that $z$ is non-positive, as wanted.

Rewriting $z$ as above, we ``give'' the coefficient $3$ of the term $h_1h_2$ to $h_1$.
But we could also give it to $h_2$ or share it between $h_1$ and $h_2$.
If we share the coefficient of $h_ih_j$ among $h_i$ and $h_j$ proportionally to the absolute value of the coefficients of $h_i$ and $h_j$, then we get
\[
z=h_1(2h_1+h_2-1)+h_2(2h_1+4h_2-2).
\]
The previous argument ensures that $z$ is non-positive for $\varepsilon$ as large as $1/3$.
This is the heuristic we use in the file \verb+local.sage+ (function \verb+negativity+).

The only problem which may appear is that, for some $i$ and $j$ (possibly $i=j$), there is a term in $h_ih_j$ with a positive coefficient but no term in $h_i$ or $h_j$ with a negative coefficient to counterbalance.
This does not happen here because $\Delta f(T_{111r}^*)$ has only negative entries, but we shall be careful about that for $T_{rrrr}^*$ in the next section.

What about the {\bf arbitrary value $\varepsilon>0$} fixed initially?
It was used to compute $H(B^+_\varepsilon)$, hence the coefficients of the polynomial $z$ in the $h_ih_j$'s and $h_i$'s.
These coefficients, in turn, yield a constraint on $\varepsilon$ in order to have $z$ non-positive (e.g. at most $1/3$ in the toy example when coefficients are proportionally shared).
The smaller the initial value of $\varepsilon$ is, the weak is the constraint we get on $\varepsilon$ for the non-positivity of $z$.
The aim is to find the equilibrium point — namely, to choose $\varepsilon$ as large as possible initially, while ensuring that it is not subsequently reduced by the resulting final constraint.
We can e.g. proceed by a trial-and-error approach, replacing each time the initial value of $\varepsilon$ by the average of the previous initial value and the previous subsequently value. 
In the case of $T_{111r}^*$, this showed that fixing initially $\varepsilon=1/686$ eventually yields the constraint $\varepsilon\leq 1/686$.

Now, the limiting value is $\varepsilon=1/2225$, obtained for $B_\varepsilon^-$ in the previous section!
But we can also use the approach described above.
This eventually yields $\varepsilon=1/417$, beyond the previous value $1/686$.
This latter is thus the limiting value; it is reported in Table~\ref{tab:epsilon}.

%%%%%%%%%%%%%%%%%%%%%%%%%%%%%%
\subsection{The stretched tetrahedron $T_{rrrr}^*$}
\label{sec:Trrrr}

The stretched tetrahedron $T_{rrrr}^*$ has two stretched (incident) edges, say bd and cd, that can be both contracted or elongated.
We thus search for $\varepsilon>0$ such that:
\begin{itemize}
\item
on $B^{++}_\varepsilon:=[0,\varepsilon]^4\times [0,\varepsilon]\times [0,\varepsilon]$, all the partial derivatives of the density are negative;
\item
on $B^{--}_\varepsilon:=[0,\varepsilon]^4\times [-\varepsilon,0]\times [-\varepsilon,0]$, the first four partial derivatives of the density are negative while those along bd and cd are positive;
\item
on $B^{+-}_\varepsilon:=[0,\varepsilon]^4\times [0,\varepsilon]\times [-\varepsilon,0]$, the first five partial derivatives of the density are negative while the one along cd is positive;
\item
on $B^{-+}_\varepsilon:=[0,\varepsilon]^4\times [-\varepsilon,0]\times [0,\varepsilon]$, all the partial derivatives of the density are negative but the one along bd that is positive.
\end{itemize}

For $B^{++}_\varepsilon$ and $B^{--}_\varepsilon$, the same approach as for $T_{111r}^*$ at order $1$ with a well-chosen Lagrange multiplier (section~\ref{sec:T111r}) works.
Namely, we get $\varepsilon=1/1133$ with Lagrange multipliers $\alpha=1.45$ for $B^{++}_\varepsilon$,
and $\varepsilon=1/1519$ with Lagrange multipliers $\alpha=0.76$ for $B^{--}_\varepsilon$.
We could get larger $\varepsilon$ by going to order $2$ as in section~\ref{sec:T111r_order2}, but it is not worth: the limiting case will be $B^{+-}_\varepsilon$ and $B^{-+}_\varepsilon$.

The cases of $B^{+-}_\varepsilon$ and $B^{-+}_\varepsilon$ are symmetric.
Let us focus e.g. on $B^{+-}_\varepsilon$.
The main problem is that, because of this symmetry, the last two partial derivatives of $f:=\delta-\alpha R$ (w.r.t. bd and cd) are equal, no matter which Lagrangian multiplier $\alpha$ we take.
We thus cannot have one positive and the other negative.
However, the sign of each derivative can be achieved as wanted independently with two different Lagrangian multipliers.
Therefore, we split $B^{+-}_\varepsilon$ in two sets:
\begin{itemize}
\item The cone $C_\varepsilon^<$ between $bd=0$ and the diagonal $bd+cd=0$,
\item The cone $C_\varepsilon^>$ between the diagonal $bd+cd=0$ and $cd=0$.
\end{itemize}
On each cone, we follows the approach described in Section~\ref{sec:T111r_order2}.
We found (the computations are detailed in the file \verb+local.sage+):
\begin{itemize}
\item
For $\alpha=0.954$, $f=\delta-\alpha R$ is maximal in $T_{rrrr}^*$ over $C_\varepsilon^<$ for $\varepsilon=1/1693$;
\item
For $\alpha=0.958$, $f=\delta-\alpha R$ is maximal in $T_{rrrr}^*$ over $C_\varepsilon^>$ for $\varepsilon=1/1691$;
\end{itemize}
The limiting value (including the values over $B_\varepsilon^{++}$ and $B_\varepsilon^{--}$) is $\varepsilon=1/1693$, reported in Table~\ref{tab:epsilon}.

The two values for $\alpha$ provide a fairly tight bracket around the value $0.9559378928267108\ldots$ for which the partial derivatives of $f$ w.r.t. bd and cd vanish, and the decay of $f$ along the diagonal is the steepest.
The idea is that by tilting $\alpha$ slightly to one side or the other of this pivot value, one can obtain the desired sign for the partial derivative w.r.t. bc or cd without significantly harming the decay of $f$ along the diagonal.
All these values (the $\alpha$'s and $\varepsilon$'s) have been obtained through trial and error.

%%%%%%%%%%%%%%%%%%%%%%%%%%%%%%%%%%%%%%%%%%%%%%%%%%%%%%%%%%%%%%%%%%%%%%%%%%%%%%%%%%%%%%%%
%%%%%%%%%%%%%%%%%%%%%%%%%%%%%%%%%%%%%%%%%%%%%%%%%%%%%%%%%%%%%%%%%%%%%%%%%%%%%%%%%%%%%%%%
%%%%%%%%%%%%%%%%%%%%%%%%%%%%%%%%%%%%%%%%%%%%%%%%%%%%%%%%%%%%%%%%%%%%%%%%%%%%%%%%%%%%%%%%
\section{Away from optimal tetrahedra}
\label{sec:dim_reduc}

%%%%%%%%%%%%%%%%%%%%%%%%%%%%%%
\subsection{Dimension reduction}

Now we have to check over the whole 6-dimensional space of FM-tetrahedra, beyond the neighborhood of optimal tetrahedra considered in the previous section, whether the density is indeed bounded as claimed in Theorem~\ref{th:main}.
As explained in Section~\ref{sec:strategy}, we want to partition this space into sufficiently small blocks of tetrahedra over which the density, computed with interval arithmetic, is provably less than the claimed bound (that is, the upper endpoint of the computed interval is less than the lower endpoint of the claimed bound).

Due to the complexity of the involved formulas (density and radius of the support sphere) we have to divide the space into many small blocks.
Since in dimension $d$, each block is divided into $2^d$ blocks, the dimension of the space appears to be challenging.
We managed to do it for the tetrahedra of types 1111 and 111r but we gave up for the other ones.
For type 111r, it amounts to check almost $5$ billions of blocks and took us more than $11$ days of CPU time (11th Gen Intel Core i5-1145G7).
In retrospect, we estimated on the basis of the results presented in Section~\ref{sec:final} that the remaining cases would have taken at least 10 times longer.

The idea of dimension reduction is to find a transformation which moves every point of the space to a point in a subspace of lower dimension so that the function to maximize (here, the density) does not decrease.
It then suffices to bound from above the function on the lower dimensional space to get a bound on the whole space.

Probably the first idea which comes is to shrink uniformly the edges of the tetrahedra (homothety) until two spheres (whose size is not modified) come into contact.
Since the volume of the tetrahedron decreases and the volume covered by the spheres does not change, the density increases.
When a contact between two spheres is reached, the length of the edge between these two spheres is fixed: we are in a 5-dimensional space.
This idea is used in \cite{FM58} in the case of triangles (to reduce from dimension $3$ to $2$).
Unfortunately, for some tetrahedra (Fig.~\ref{fig:homothety}), this transformation may exit the space of FM-tetrahedra because the radius of the support sphere either increases beyond $r$ or even ceases to exist.
Such a transformation is thus not suitable here.

\begin{figure}[htbp]
\centering
\includegraphics[width=0.8\textwidth]{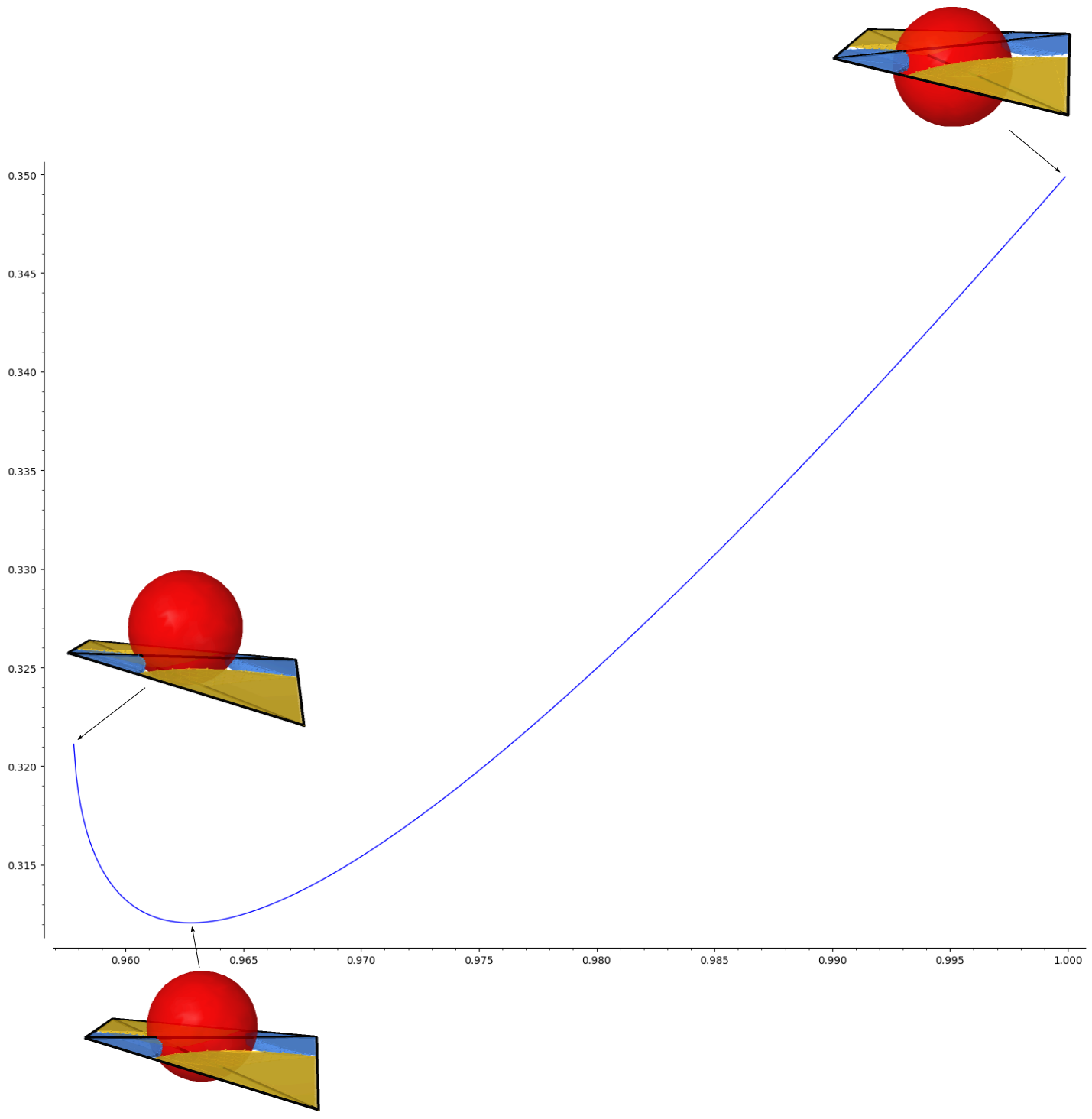}
\caption{
Consider the tetrahedron of type 11rr with edge lengths $(2.7,1.6,1.6,1.5,1.5,1.5)$ (rightmost tetrahedron).
It has a support sphere (red sphere) of radius $R\approx 0.35$.
It is thus an FM-tetrahedron (even if it is quite flat -- such tetrahedra are usually called ``sliver'').
When shrinking the edges of this tetrahedron by a factor $x<1$, the radius decreases until $R\approx 0.312$ for $x\approx 0.963$ (bottom most tetrahedra).
But it then increases until $R\approx 0.323$ for $x\approx 0.958$ (leftmost tetrahedra) and is not anymore defined for smaller $x$: we left the space of FM-tetrahedra before any contact between spheres have been reached!
}
\label{fig:homothety}
\end{figure}

It is not very hard to imagine transformations which increase the number of contacts between spheres (i.e. reduce the dimension) and seem to decrease the density.
For example, one can contract one edge until the spheres at its endpoints get into contact.
Or we can slide a sphere along an edge until it gets into contact with the sphere centered in the other endpoint.
In order to be sure to not increase the radius of the support sphere, we could also ``roll'' a sphere on the support sphere towards another sphere.
However, it may be hard to prove -- if ever true -- that these transformations indeed increase the density and do not exit the space of FM-tetrahedra.

In what follows, we shall prove that sliding spheres towards the support sphere allows us to consider a $5$-dim. space.
Unfortunately, despite numerous attempts, we were not able to reduce the dimension further.
Mention that sliding was already used for triangles in \cite{FM58}, as well as for tetrahedra when $r=1$ in \cite{Hal97}.
One of the key points of this transformation is that its action on the solid angles at the vertices of a tetrahedron or triangle is relatively simple (especially in a triangle, since the sum of its angles is constant, which is unfortunately not true for a tetrahedron).

Actually, it will be more convenient to work with an additive analogue of the density, namely the compression.
Recall that $\textrm{vol}(T)$ denotes the volume of the tetrahedron $T$ and $\textrm{cov}(T)$ denotes the volume of $T$ covered by its spheres.
Then:

\begin{definition}
For $\delta>0$, the {\bf $\delta$-compression} of a tetrahedron $T$ is defined by
\[
\Gamma_\delta(T):=\textrm{cov}(T)-\delta\textrm{vol}(T).
\]
\end{definition}

%%%%%%%%%%%%%%%%%%%%%%%%%%%%%%
\subsection{Sliding and the compression}

The following lemma shows that if a somewhat technical inequality is verified, then compression behaves well with sliding:

\begin{lemma}
\label{lem:sliding}
Let $ABCD$ be a tetrahedron (not necessarily FM) and $\delta>0$.
Assume that, for $d$ defined by $\delta d^3=1$, the following inequality holds
\[
\frac{(r_B+r_C)(r_A+(1-d)\max(r_B,r_C))^2}{12(1+r)}
>
\frac{d\min(r_B,r_C)(d-1)^2(r_B+r_C)^2}{2
\min(r_A,r_B,r_C)}.
\]
Then, sliding $A$ towards $D$ does not decrease the $\delta$-compression of $ABCD$.
\end{lemma}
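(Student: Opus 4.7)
The plan is to parametrize the slide by arc length along $\overrightarrow{AD}$: set $A_s := A + s\vec{u}$ where $\vec{u}$ is the unit vector along $\overrightarrow{AD}$, and let $T_s := A_sBCD$. Since $A_s$ lies on the closed segment $AD$ we have $T_s \subseteq T_0$, and expressing both volumes with base $BCD$ and heights (using that $D$ lies in the plane of $BCD$) yields $\textrm{vol}(T_s) = (1 - s/|AD|)\,\textrm{vol}(T_0)$. Hence the wedge $W := T_0 \setminus T_s$ has volume $(s/|AD|)\,\textrm{vol}(T_0)$, and the conclusion $\Gamma_\delta(T_s) \geq \Gamma_\delta(T_0)$ rewrites as
\[
\textrm{cov}(T_0) - \textrm{cov}(T_s) \;\leq\; \delta\,\textrm{vol}(W).
\]
So the task is to bound the lost covered volume by $\delta$ times the lost total volume.

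Next I would decompose the lost covered volume sphere by sphere. Writing $\textrm{cov}_X(T)$ for the portion of $T$ covered by the sphere at vertex $X$, the contribution of each stationary vertex $X \in \{B,C,D\}$ is just $\textrm{vol}(B_X \cap W)$ (with $B_X$ the closed ball centered at $X$), while the contribution of the moving vertex is $\textrm{cov}_A(T_0) - \textrm{cov}_{A_s}(T_s)$, a comparison of two spherical sectors with different apices. Proposition~\ref{prop:FM_faces} enters crucially here: since every face of $T_0$ is an FM-triangle, the disks at its three vertices stay clear of the opposite edge, and on the two faces $ABD$ and $ACD$ --- where sliding $A$ along $AD$ is literally a $2$-D FM-slide --- this lets one invoke the FM58 planar argument to control the change of the angles at $B$ and at $C$ inside those faces. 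The resulting bounds on the sphere-at-$B$ and sphere-at-$C$ contributions are, to leading order, $s$ times a quantity that reproduces the left-hand side $(r_B+r_C)(r_A+(1-d)\max(r_B,r_C))^2 / (12(1+r))$ of the hypothesis; the constant $12(1+r)$ emerges from normalizing against $\textrm{vol}(W)/s$ using the uniform upper bound $1+r$ on the edges incident to $D$ (Prop.~\ref{prop:bounded_edge}).

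The two remaining pieces are the sphere at $D$ and the moving sphere at $A$. The sphere at $D$ contributes $\textrm{vol}(B_D \cap W)$, which is of higher order in $s$ because $W$ is concentrated near the far end $A$ of the tetrahedron while the distance from $D$ to $W$ is bounded below by $r_A$; this piece is absorbed by slack in the hypothesis. For the moving sphere at $A$, I would compare the sectors $B_A \cap T_0$ and $B_{A_s} \cap T_s$ by translating the ball by $s\vec{u}$ and bounding the symmetric difference via the geometry of the tilted face $ABC$; the relation $\delta d^3 = 1$ says that a ball of radius $r_A$ achieves the target density $\delta$ inside a volume equal to that of a concentric ball of radius $d\,r_A$, which after a second-order expansion gives a factor $(d-1)^2$ matching the right-hand side $d\min(r_B,r_C)(d-1)^2(r_B+r_C)^2 / (2\min(r_A,r_B,r_C))$ of the hypothesis. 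The ratio $\min(r_B,r_C)/\min(r_A,r_B,r_C)$ encodes which of the three spheres is the binding constraint for the displacement of $A$. Summing every contribution and invoking the hypothesis then produces the claimed bound.

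The main obstacle will be the face $ABC$, which does \emph{not} undergo a $2$-dimensional slide, since $A$ moves off the plane of $ABC$; one cannot appeal to FM58 for this face. Instead the change of the spherical sector at $A$ has to be estimated in $3$-D by comparing the two translated balls $B_A$ and $B_{A_s}$ together with the rotation of $ABC$. The quadratically small buffer $(d-1)^2$ on the right-hand side of the hypothesis is precisely what is needed to accommodate this awkward contribution, while the linear-in-$s$ slack on the left-hand side covers the planar (FM58) contributions from the faces $ABD$ and $ACD$. No new idea beyond FM58, Proposition~\ref{prop:FM_faces}, and a careful sphere-translation bound should be required; lining up all the constants is the delicate bookkeeping step.
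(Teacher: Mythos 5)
Your approach diverges from the paper's at a structural level and has genuine gaps. The paper does \emph{not} decompose the lost covered volume sphere by sphere nor appeal to the FM58 planar argument on faces $ABD$, $ACD$. Instead it uses an inflation trick from \cite{Hal97}: inflate the spheres at $B$ and $C$ by the factor $d$ (with $\delta d^3=1$), let $S_B,S_C$ be the intersections of these inflated spheres with the removed wedge $dT$, and show
\[
\textrm{vol}(dT)\geq \textrm{vol}(S_B)+\textrm{vol}(S_C)+\textrm{vol}(P)-\textrm{vol}(S_B\cap S_C),
\]
where $P$ is an explicit small tetrahedron $AA'B_dC_d$ inside $dT$ disjoint from both inflated spheres. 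Because $\textrm{vol}(S_X)=d^3\tfrac{r_X^3}{3}d\widehat{X}$, this converts the volume loss directly into solid-angle losses, and everything reduces to showing $\textrm{vol}(P)\geq\textrm{vol}(S_B\cap S_C)$. The two sides of the hypothesized inequality are exactly a lower bound on $\textrm{vol}(P)$ (via the law of sines and the bound $1+r$ on the circumradius) and an upper bound on the lens $S_B\cap S_C$ (via a spherical-cap pie-slice estimate and Prop.~\ref{prop:FM_faces}) — not, as you guessed, ``planar FM58 contributions'' on the left versus a ``sphere translation bound'' for the moving sphere on the right.

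Concretely, your plan has these problems. First, the 3-D solid angle at $B$ (or $C$) is not recoverable from the planar angle in the face $ABD$ (or $ACD$); it depends on all three face angles at that vertex, so the FM58 two-dimensional slide argument does not control it, and the very sentence in the introduction about the non-constant sum of solid angles is the paper warning against this. Second, the sphere at $D$ contributes \emph{nothing}: sliding $A$ along the line $AD$ leaves the three directions emanating from $D$ unchanged, so $\widehat{D}$ is invariant and $B_D\cap W=\emptyset$; calling it ``higher order in $s$'' is a misdiagnosis. Third, the moving sphere at $A$ is also a non-issue in the paper: the solid angle at $A$ only \emph{increases} under the slide, so that contribution is simply dropped (``we are not going to exploit the solid angle gain in $A$''), whereas you devote a whole step to it and attach the $(d-1)^2$ factor to it. Finally, the face $ABC$ you flag as the ``main obstacle'' is not an obstacle at all in the paper's decomposition; its only role is via Prop.~\ref{prop:FM_faces} to justify the lower bound $|AH'|\geq\min(r_A,r_B,r_C)$ in estimating the lens volume. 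Without the inflation trick, your decomposition does not lead to a comparison against the wedge volume and I do not see how to complete it.
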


\begin{figure}[htbp]
\centering
\includegraphics[width=0.9\textwidth]{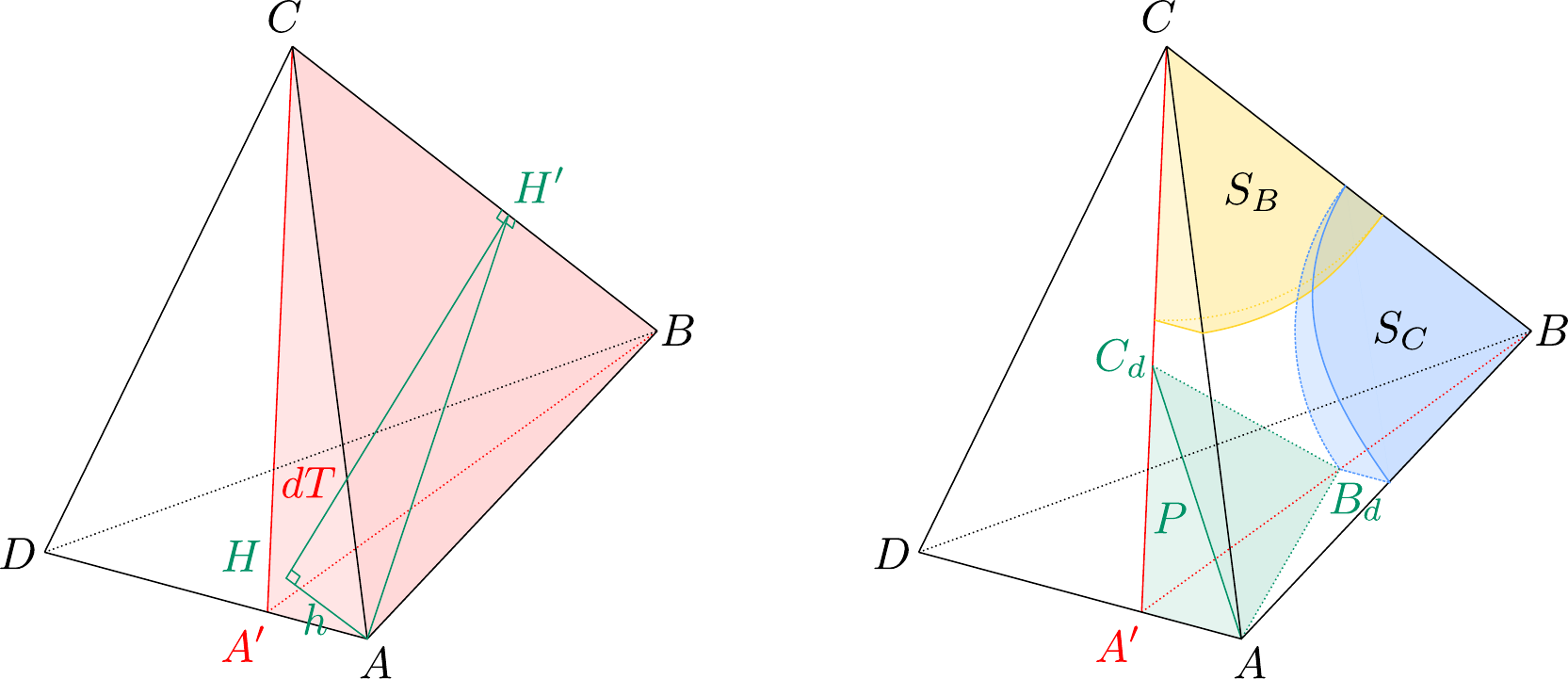}
\caption{
Consider a tetrahedron $T=ABCD$, where the sphere centered in $A$ is slid towards $D$ and becomes centered in $A'$.
Denote by $dT$ the tetrahedron $AA'BC$,
by $H$ the orthogonal projection of $A$ onto $A'BC$,
by $H'$ the orthogonal projection of $H$ onto $BC$
and by $h$ the length $|AH|$ of the edge $AH$ (left).
For $X\in\{B,C\}$, let $S_X$ denote the volume of the intersection of $dT$ with the inflated sphere of center $X$ and radius is $dr_X$; let also $X_d$ denote the point on the edge $A'X$ of $dT$ which is at distance $r_A+(1-d)\max(r_B,r_C)$ from $A'$ (right).
}
\label{fig:compression}
\end{figure}

\begin{proof}
Note: Fig.~\ref{fig:compression} depicts all the notations introduced in this proof.
Consider a tetrahedron $T$ with vertices $A$, $B$, $C$ and $D$.
Let $r_X$ denote the radius of the sphere centered in the vertex $X$.
Assume that the sphere centered in $A$ can be slid towards $D$ and denote by $A'$ its new center -- the edge DA' is thus tight.
Let also $T'$ denote the tetrahedron after the sliding.
We shall prove $\Gamma_{\delta^*_t}(T')>\Gamma_{\delta^*_t}(T)$, where $t$ is the type of $T$.

The sliding has contrasting effects on the compression.
On the one hand, it increases it because the volume of the tetrahedron decreases and the solid angle in $A$ increases.
On the other hand, it decreases the compression because the solid angles in both $B$ and $C$ decrease.
That the $\delta$-compression, overall, increases will follow from
\begin{equation}
\label{eq:sliding1}
\delta\textrm{vol}(dT)\geq \frac{r_B^3}{3}d\widehat{B}+\frac{r_C^3}{3}d\widehat{C},
\end{equation}
where $dT$ denotes the tetrahedron $T\backslash T'$ and $d\widehat{X}$, for $X\in\{B,C\}$ denotes the solid angle loss in $X$.
In particular, we are not going to exploit the solid angle gain in $A$.

The difficulty in comparing the volume and solid angle losses lies in the fact that their formulas are very different.
The idea implemented in \cite{Hal97} is to inflate the spheres centered in $B$ and $C$ by a suitable factor $d>1$ and to bound the volume loss from below by the volume of a region $P\subset T'\backslash T$ which does not intersect the inflated spheres.
The point of inflating spheres is that the resulting volume gain can be expressed as a function of solid angle losses.

Let us formalize this.
Let $H$ denote the foot of the altitude from $A$ of $dT$, $h$ the length of this altitude and $H'$ the orthogonal projection of $H$ onto the line $(BC)$.
For $X\in\{B,C\}$, let $S_X$ denote the volume of the intersection of $dT$ with the inflated sphere of center $X$ and radius is $dr_X$.
Let also $X_d$ denote the point on the edge $A'X$ of $dT$ which is at distance $r_A+(1-d)\max(r_B,r_C)$ from $A'$.
For $d>1$, the length $|A'X_d|$ of the edge $A'X_d$ satisfies
\begin{eqnarray*}
|A'X_d|
&=&r_A+(1-d)\max(r_B,r_C),\\
&=&r_A+\min((1-d)r_B,(1-d)r_C),\\
&=&\min(r_A+r_B-dr_B,r_A+r_C-dr_C),\\
&\leq&\min(|A'B|-dr_B,|A'C|-dr_C).
\end{eqnarray*}
That is, both $B_d$ and $C_d$ lie on a sphere of center $A'$ which intersects neither $S_B$ nor $S_C$.
Hence, the tetrahedron $P$ with vertices $A$, $A'$, $B_d$ and $C_d$ overlaps neither $S_B$ nor $S_C$.
The main point now will be to show
\begin{equation}
\label{eq:sliding2}
\textrm{vol}(P)\geq \textrm{vol}(S_B\cap S_C).
\end{equation}
Indeed, this will yield
\[
\textrm{vol}(dT) \geq 
\underbrace{\textrm{vol}(S_B)+\textrm{vol}(S_C)}_{=d^3\left(\frac{r_B^3}{3}d\widehat{B}+\frac{r_C^3}{3}d\widehat{C}\right)}+\underbrace{\textrm{vol}(P)-\textrm{vol}(S_B\cap S_C)}_{\geq 0}.
\]
This yields \eqref{eq:sliding1} for any $d$ which satisfies
\begin{equation}
\label{eq:sliding3}
\delta d^3\geq 1.
\end{equation}
To show \eqref{eq:sliding2}, we will first bound $\textrm{vol}(P)$ from below, then bound $\textrm{vol}(S_B\cap S_C)$ from above and finally combine these bounds to get \eqref{eq:sliding2} with a value $d$ which satisfies \eqref{eq:sliding3}.
This will yield \eqref{eq:sliding1} and thus conclude the proof of the claimed result.

\paragraph{Minoration of $\textrm{vol}(P)$.}
We have
\[
\textrm{vol}(P)=h\times\frac{1}{3}\textrm{area}(A'B_dC_d)=h\times \frac{1}{6}\sin\widehat{BA'C}\times |A'B_d|\times |A'C_d|.
\]
The radius of the support sphere of $T$ being at most $r$, the radius of the sphere circumscribed to $T$ is at most $1+r$.
This also bounds from above the radius of the circle circumscribed to the triangle $BA'C$, so that the law of sines yields
\[
\frac{|BC|}{\sin\widehat{BA'C}}\leq 2(1+r).
\]
Since $|BC|\geq r_B+r_C$, this ensures
\[
\sin\widehat{BA'C}\geq \frac{r_B+r_C}{2(1+r)}.
\]
Finally, we get
\begin{equation}
\label{eq:sliding4}
\textrm{vol}(P)\geq h\frac{(r_B+r_C)(r_A+(1-d)\max(r_B,r_C))^2}{12(1+r)}.
\end{equation}

\paragraph{Majoration of $\textrm{vol}(S_B\cap S_C)$.}
If $S_B\cap S_C=\emptyset$, then \eqref{eq:sliding2} is proven.
Assume $S_B\cap S_C\neq\emptyset$, that is, $|BC|\leq d(r_B+r_C)$.
The intersection of $S_B$ and $S_C$ is a sort of (possibly asymmetrical) lens included in a spherical cap of radius $R:=d\min(r_B,r_C)$ and height
\[
x:=d(r_B+r_C)-|BC|\leq (d-1)(r_B+r_C).
\]
Such a spherical cap has volume
\[
\frac{\pi}{3}(3R-x)x^2.
\]
But $S_B\cap S_C$ is also delimited by the faces $ABC$ and $A'BC$, so its volume is actually bounded by the volume of a ``pie slice'' of angle $\widehat{BC}_{AA'}$ of the above spherical cap, where $\widehat{BC}_{AA'}$ denotes the dihedral angle between the faces $ABC$ and $A'BC$.
This yields
\begin{eqnarray*}
\textrm{vol}(S_B\cap S_C)
&=& \frac{\pi}{3}(\underbrace{3d\min(r_B,r_C)-x}_{\leq 3d\min(r_B,r_C)})x^2\times\frac{\widehat{BC}_{AA'}}{2\pi}\\
&\leq& \frac{1}{2}d\min(r_B,r_C)(d-1)^2(r_B+r_C)^2\times\widehat{BC}_{AA'}.
\end{eqnarray*}
Further, one has
\[
\widehat{BC}_{AA'}=\widehat{AH'H}\leq\tan\widehat{AH'H}=\frac{h}{|HH'|}=\frac{h}{\sqrt{|AH'|^2-h^2}}.
\]
Now, according to Prop.~\ref{prop:FM_faces} (the only place it is used), $ABC$ is an FM-triangle, hence the sphere in $A$ does not intersect the edge $BC$ (as proven in \cite{FM58}).
Similarly, the spheres in $B$ and $C$ do not intersect the edge $AC$ and $AB$, respectively.
This ensures either $|AH'|\geq r_A$ if $H'$ is between B and C,
or $|AH'|\geq r_C$ if $H'$ is closer to $C$ than to $B$
or $|AH'|\geq r_B$ if $H'$ is closer to $B$ than to $C$.
In any case:
\[
|AH'|\geq\min(r_A,r_B,r_C).
\]
Finally, we get
\begin{equation}
\label{eq:sliding5}
\textrm{vol}(S_B\cap S_C)\leq h\frac{\min(r_B,r_C)(d-1)^2(r_B+r_C)^2}{2\sqrt{\min(r_A,r_B,r_C)^2-h^2}}.
\end{equation}

\paragraph{Combining the minoration and the majoration.}
Combining Inequalities~\eqref{eq:sliding4} and \eqref{eq:sliding5} we get that the inequality stated in the lemma ensures, for $h$ small enough, Inequality \eqref{eq:sliding2}.
The $\delta$-compression then does not decrease.
\end{proof}

%%%%%%%%%%%%%%%%%%%%%%%%%%%%%%
\subsection{Sliding to the first contact}

As mentioned, besides compression, the problem with sliding is that the radius of the support sphere may increase (or such a sphere may even not exist), so we might leave the space of FM-tetrahedra.
For this reason, we use sliding in a specific way.
Namely, we slide equally the four spheres of the considered FM-tetrahedron towards the center of its support sphere (by {\em equally} we mean that the displacement of each vertex has the same magnitude).
The fact that the obtained tetrahedron is FM is then straightforward: its support sphere is still centered on the same point and its radius is decreased by the magnitude of the vertex displacement (hence is at most $r$).
We can do this until a first contact between two spheres is reached (Fig.~\ref{fig:first_contact}).

\begin{figure}[htbp]
\centering
\includegraphics[width=0.9\textwidth]{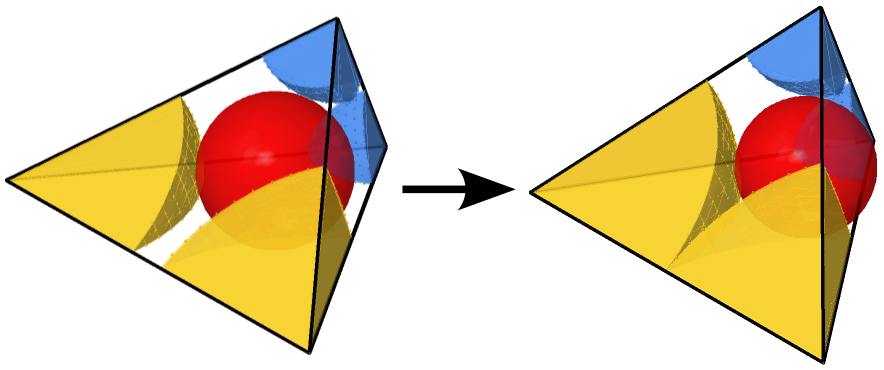}
\caption{
Sliding spheres towards the support sphere (in red) until a contact between spheres (here the two large ones) is reached (from left to right).
Since the support sphere shrinks (its center does not move), we are ensured not to leave the space of FM-tetrahedra.
}
\label{fig:first_contact}
\end{figure}

The following proposition shows that, starting from a tetrahedron less dense than the claimed maximum density $\delta_t^*$, this transformation does not allow us to exceed this density:

\begin{proposition}
\label{prop:first_contact}
Let $T=ABCD$ be an FM-tetrahedron of type $t$ and of density at most $\delta_t^*$.
Then, sliding equally the spheres of $T$ towards the center $O$ of the support sphere of $T$ yields an FM-tetrahedron of density at most $\delta_t^*$.
\end{proposition}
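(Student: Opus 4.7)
The plan is to reformulate the claim in terms of the $\delta$-compression $\Gamma_\delta(T) = \text{cov}(T) - \delta\,\text{vol}(T)$ introduced just above the proposition. Since $\delta(T') \leq \delta_t^*$ is equivalent to $\Gamma_{\delta_t^*}(T') \leq 0$, the proposition follows from the stronger statement that $\Gamma_{\delta_t^*}(T_s)$ is non-increasing along the continuous sliding trajectory $s \mapsto T_s$ for $s \in [0, s_1]$, where $s_1$ is the first-contact time.

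First, parameterize the motion: write $V_i(s) = V_i + s\,\hat{u}_i$ with $\hat{u}_i = (O - V_i)/|O - V_i|$. The identity $|V_i(s) - O| = r_i + R - s$ shows that the support sphere of $T_s$ stays centered at $O$ with radius $R(s) = R - s \leq r$, so every $T_s$ with $s \in [0, s_1]$ is an FM-tetrahedron. This already delivers the ``yields an FM-tetrahedron'' part of the conclusion, and gives a clean one-parameter family along which to differentiate.

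For the compression bound, my approach is to decompose the simultaneous four-vertex motion into four successive single-vertex slidings, each moving one vertex toward $O$, and apply Lemma~\ref{lem:sliding} to each. Lemma~\ref{lem:sliding} is stated for sliding one vertex toward another vertex (making a single edge tight), while here the motion is toward $O$; however, the toward-$O$ direction at $V_i$ can be written as a combination of the directions from $V_i$ toward the three remaining vertices, using that $O$ lies at known distances $r_j + R$ from each $V_j$. This lets one reuse the altitude-and-inflated-sphere construction of Lemma~\ref{lem:sliding}, with the dihedral-angle estimate relying on Prop~\ref{prop:FM_faces} exactly as in that proof, and obtain that each component sliding does not increase $\Gamma_{\delta_t^*}$.

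The main obstacle will be verifying the technical hypothesis of Lemma~\ref{lem:sliding} — a comparison between a swept-tetrahedron volume and an intersection of inflated spherical caps, with the inflation factor $d$ satisfying $\delta_t^*\, d^3 = 1$ — for the admissible radii $\{1, r\}$ with $r = \sqrt{2}-1$ and $\delta = \delta_t^*$. Because only finitely many choices of $(r_A, r_B, r_C, r_D)$ enter, this reduces to a small family of explicit numerical inequalities. If the reduction to Lemma~\ref{lem:sliding} turns out to be lossy (for instance because the geometric directions do not match closely enough), a fallback is to differentiate $\Gamma_{\delta_t^*}(T_s)$ directly, using Lagrange's formula from Section~\ref{sec:formulas} to express $d\hat{V}_i/ds$ and the Cayley-Menger determinant for $d\,\text{vol}/ds$, and to show $d\Gamma_{\delta_t^*}/ds \leq 0$ pointwise through the upper bound $\text{cov}(T) \leq \sum_i (r_i^3/3)\hat{V}_i$.
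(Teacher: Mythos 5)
Your reformulation via the $\delta$-compression, your argument that the support sphere shrinks in place (hence the slid tetrahedron stays in the FM domain), and your idea of serializing the simultaneous motion into four single-vertex slidings all match the paper. The decisive step, however, does not: you identify the mismatch between ``slide $A$ towards a \emph{vertex}'' (the setting of Lemma~\ref{lem:sliding}) and ``slide $A$ towards the non-vertex point $O$,'' but your proposed fix --- writing the unit vector from $V_i$ to $O$ as a linear combination of the directions to the three remaining vertices and then ``reusing the construction'' --- does not actually let you invoke the lemma. The lemma's whole geometry (the swept tetrahedron $dT$, the points $B_d,C_d$ on the edges emanating from $A'$, the dihedral-angle bound) is anchored to a displacement \emph{along an edge of the tetrahedron under consideration}; a linear combination of edge directions is not a composition of such slides, and the estimates do not transfer. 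So as written there is a genuine gap here, and you half-acknowledge it by proposing a fallback (direct differentiation of $\Gamma$ via Lagrange's solid-angle formula and the Cayley--Menger determinant), which is a different and considerably heavier route than the paper's.

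The paper resolves the mismatch by a much simpler device that you missed: partition $T=ABCD$ into the four subtetrahedra $OABC$, $OABD$, $OACD$, $OBCD$ sharing the vertex $O$. Both $\mathrm{vol}$ and $\mathrm{cov}$ (covered by the spheres at $A,B,C,D$ only) are additive over this partition, so $\Gamma_{\delta^*_t}(T)$ is the sum of the subtetrahedra's compressions. In each subtetrahedron the point $O$ \emph{is} a vertex, so sliding (say) $A$ towards $O$ is exactly the lemma's situation with $O$ in the role of $D$; and since the sphere at the target vertex never enters the lemma's argument, it is irrelevant that $O$ carries only the support sphere rather than one of the packing spheres. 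The per-vertex slidings you envisaged are then handled by applying the lemma to each of the three subtetrahedra containing that vertex, and the finitely many numerical inequalities (depending only on the radii in $\{1,r\}$ and on $\delta^*_t$) are verified computationally, exactly as you anticipated. In short: keep your serialization and your compression framing, but replace the ``combination of directions'' idea by the $O$-subtetrahedron decomposition, which is what makes Lemma~\ref{lem:sliding} directly applicable.

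One smaller point: you read the proposition's hypothesis ``density at most $\delta_t^*$'' at face value and therefore aim to show $\Gamma_{\delta^*_t}$ is \emph{non-increasing}, yet you then invoke Lemma~\ref{lem:sliding}, which proves the compression does \emph{not decrease}. These pull in opposite directions. The inequality that is actually needed (and that the paper later uses, when it concludes that a tetrahedron denser than $\delta^*_t$ remains so after sliding) is the non-decreasing one; the wording ``at most'' in the statement is an infelicity you should not build your argument on.
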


\begin{proof}
By definition of the compression, having density at most $\delta_t^*$ is equivalent to having a non-negative $\delta_t^*$-compression.
Since it holds for $T$, it suffices to prove that the $\delta_t^*$-compression does not decrease during the sliding: this will ensure that it holds for the obtained tetrahedron.
Note that we do not prove that the density increases (though experiments on random FM-tetrahedra suggest it is the case).

This is proven by applying Lemma~\ref{lem:sliding} to each of the four subtetrahedra defined by the center $O$ of the support sphere and each of the four faces of $T$ (indeed, the sphere towards which we slide does not play any role in Lemma~\ref{lem:sliding}).
For that, we just check by computation that the inequality in Lemma~\ref{lem:sliding} holds.
This computation can be found in the file \verb+sliding.sage+.
\end{proof}

In particular, if some FM-tetrahedron of type $t$ has density exceeding $\delta_t^*$, then so does the FM-tetrahedron obtained by sliding the spheres equally as much as possible towards the support sphere.
Hence, in order to verify that every FM-tetrahedron of type $t$ has density at most $\delta_t^*$, it suffices to check it for the FM-tetrahedra with two spheres in contact, i.e., with a tight edge.
By considering the combinations of sphere radii and contact pairs, we get, up to symmetry, the $9$ cases depicted in Fig.~\ref{fig:one_contact}.

\begin{figure}[htbp]
\includegraphics[width=\textwidth]{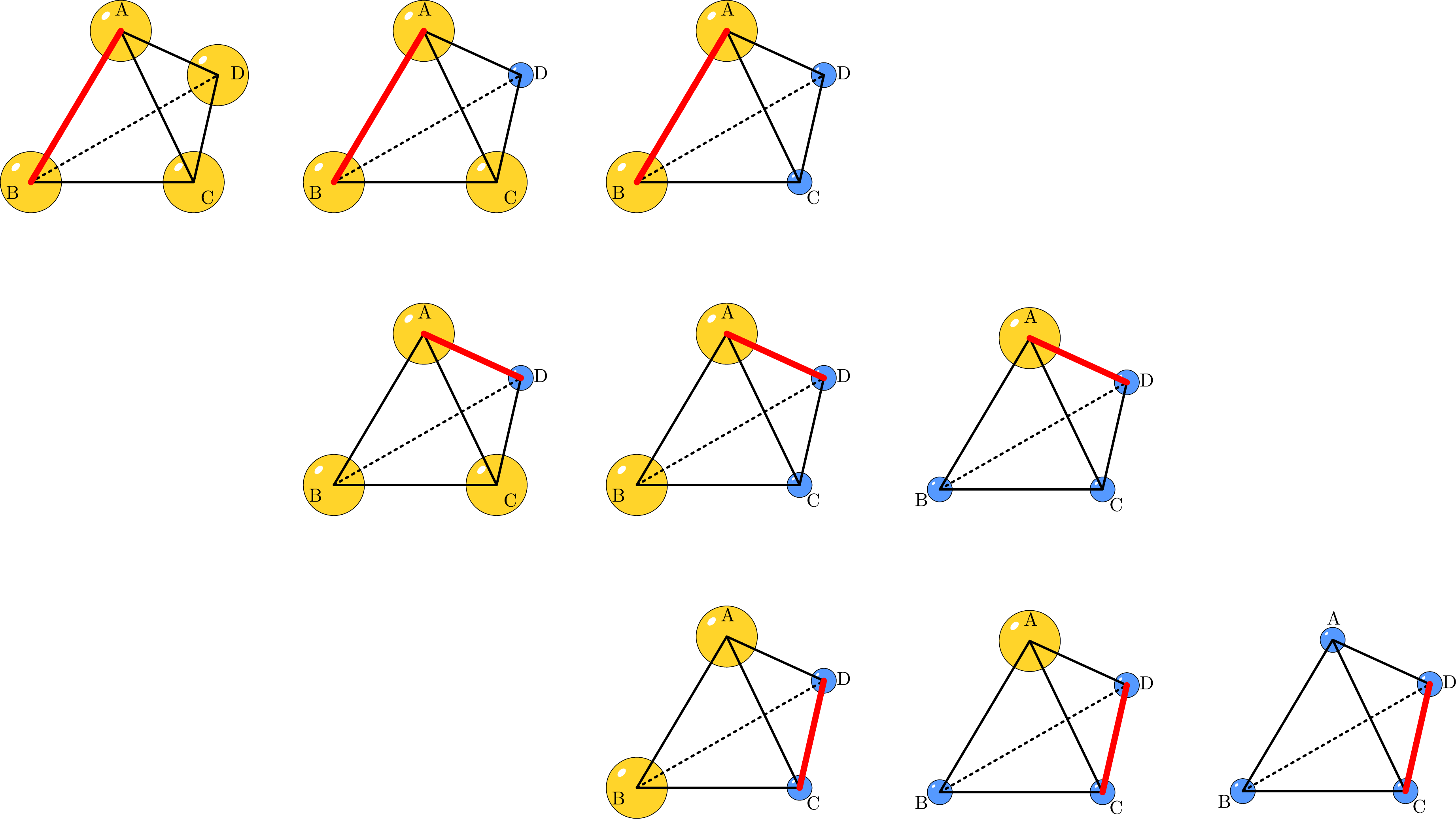}
\caption{
%W.l.o.g., $r_A\geq r_B\geq r_C\geq r_D$.
There are $5$ types of tetrahedra (one per column).
The radii of the two touching spheres (red edge) can be $1-1$, $1-r$ or $r-r$ (one case per line).
This yields, up to symmetry, these $9$ cases.
}
\label{fig:one_contact}
\end{figure}

It is tempting to try to slide more spheres to obtain a second contact, or even more.
However, there are cases where sliding two spheres of a tetrahedron towards each other increases the radius of the support sphere beyond $r$, i.e. leaves the space of FM-tetrahedra (similarly as in Fig.~\ref{fig:homothety}).
Of course, reaching the limit $R=r$ or a zero discriminant of the polynomial $P$ (Prop.~\ref{prop:radius}) also reduces the dimension (the length of a given edge can be deduced from $R$ and the lengths of the other edges).
Unfortunately, in practice, the complexity of the involved formulas makes the computation even more costly than stopping at a single contact.
We have therefore abandoned this approach.

%%%%%%%%%%%%%%%%%%%%%%%%%%%%%%%%%%%%%%%%%%%%%%%%%%%%%%%%%%%%%%%%%%%%%%%%%%%%%%%%%%%%%%%%
%%%%%%%%%%%%%%%%%%%%%%%%%%%%%%%%%%%%%%%%%%%%%%%%%%%%%%%%%%%%%%%%%%%%%%%%%%%%%%%%%%%%%%%%
%%%%%%%%%%%%%%%%%%%%%%%%%%%%%%%%%%%%%%%%%%%%%%%%%%%%%%%%%%%%%%%%%%%%%%%%%%%%%%%%%%%%%%%%
\section{Implementation}
\label{sec:computer}

%%%%%%%%%%%%%%%%%%%%%%%%%%%%%%
\subsection{Solving a quadratic equation}

Prop~\ref{prop:radius} shows that computing the radius $R$ of the support sphere of a tetrahedron amounts to finding the roots of a quadratic polynomial.
This is usually simple, but here we have to be a bit more careful, because the coefficients are not real numbers but intervals.
Let us detail this.
Assume we have to find the real roots of a quadratic polynomial $AX^2+BX+C$, where $A$, $B$ and $C$ are intervals.

First, if the discriminant $\Delta:=B^2-4AC$ is negative (that is, it is an interval included in $(-\infty,0)$), then there is no real root.
We thus set
\[
\Delta=(B^2-4AC)\cap [0,\infty)
\]
and assume it is not empty.
The polynomial has now two real roots:
\[
X_1:=\frac{-B+\sqrt{\Delta}}{2A}
\quad\textrm{and}\quad
X_2:=\frac{-B-\sqrt{\Delta}}{2A}.
\]
The narrower the intervals $A$, $B$ and $C$ are, the better is the precision on the roots, except if $A$ contains $0$.
In that case, the roots are both $(-\infty,\infty)$, no matter how narrow $A$ is.
This happens for blocks which contain a tetrahedron such that $A$ is equal to zero\footnote{Since $A=0$ defines an algebraic variety which intersects the space of FM-tetrahedra (except for type 1111), there is a continuum of such cases.
An explicit example is the type 111r tetrahedron with edge lengths $(2,2,1+r,2\sqrt{2-r},1+r,1+r)$.}.
However, for $A=0$, the polynomial is $BX+C$ and has a well defined root, namely $-C/B$.
Actually, the root $X_1$ tends towards $-C/B$ when $A$ tends towards $0$, while $X_2$ tends towards plus or minus infinity.
We therefore want to compute
\begin{enumerate}
\item an interval for $X_1$ which becomes narrower around $-C/B$ when $A$ tends towards $0$;
\item a lower bound on $X_2\cap[0,\infty)$ which tends towards infinity when $A$ tends towards $0$.
\end{enumerate}
Let $x:=4AC/B^2$.
Since $\Delta\geq 0$, we have $x\leq 1$.
Let us first assume $0\notin B$ (we will come back to this case at the end).
We can then assume $B>0$, otherwise we consider the polynomial $-AX^2-BX-C$ which has the same roots.
On the one hand,
\begin{eqnarray*}
X_1&=&\frac{-B+B\sqrt{1-x}}{2A}\\
&=&-\frac{C}{B}\frac{B^2}{2AC}(1-\sqrt{1-x})\\
&=&-\frac{C}{B}\frac{2}{x}(1-\sqrt{1-x})\\
&=&-\frac{C}{B}\frac{2}{1+\sqrt{1-x}}.
\end{eqnarray*}
With $f(x):=\tfrac{2}{1+\sqrt{1-x}}$:
\begin{equation}
\label{eq:X1}
X_1=-\frac{C}{B}f(x).
\end{equation}
Since $f$ is continuous over $(-\infty,1]$ and $f(0)=1$, this yields the wanted interval for $X_1$.
On the other hand,
\[
X_2
=\frac{-B-B\sqrt{1-x}}{2A}
=-\frac{B}{A}\frac{1+\sqrt{1-x}}{2}
=-\frac{B}{A}\frac{1}{f(x)}.
\]
We thus get for $\tfrac{1}{X_2}$ the interval
\begin{equation}
\label{eq:X2}
\frac{1}{X_2}=-\frac{A}{B}f(x).
\end{equation}
The positive values of $X_2$ (if any) are then bounded from below by the inverse of the right endpoint of this interval.
Since this quantity tends towards infinity when $A$ tends towards $0$, this yields the wanted lower bound on the positive values of $X_2$.

The above reasoning holds under the assumption $0\notin B$.
But if $0\in B$, then \eqref{eq:X1} yields $X_1=(-\infty,\infty)$, which is clearly an interval containing all the real roots of the polynomial.
Computing $X_1$ and $X_2$ according to \eqref{eq:X1} and \eqref{eq:X2} thus still yields a valid interval.

Now, the narrower the intervals $A$, $B$ and $C$, the better the precision on the roots, except in cases where there exists a tetrahedron for which both $A$ and $B$ are zero (and thus $C$ as well).
Such tetrahedra do exist\footnote{Recall the flat tetrahedron mentioned after Definition~\ref{def:support_sphere}.} but since the recursive checking of the blocks eventually terminated, they were ruled out for other reasons, e.g. a density lower than the claimed bound.

These computations are implemented in the file \verb+trinome.cpp+.

%%%%%%%%%%%%%%%%%%%%%%%%%%%%%%
\subsection{Formula optimization}
\label{sec:formulas_opt}

One of the peculiarities of interval arithmetic that can confuse mathematicians is the fact that the result of evaluating an expression depends on the expression itself.
In other words, among many different but equivalent expressions, some may be better than other.
To paraphrase Orwell: all expressions are equal, but some expressions are more equal than others.
This is known as the {\em dependency problem} and it is a major obstacle to the application of interval arithmetic. 

For example, the two following expressions are mathematically equivalent:
\[
xy+3x+2y
\quad\textrm{and}\quad
(x+2)(y+3)-6.
\]
However, for the intervals $x=[1,2]$ and $y=[-3,-2]$, the first expression yields the interval $[-9,0]$ while the second one yields $[-6,-2]$:
\begin{verbatim}
sage: x=RealIntervalField()(1,2)
sage: y=RealIntervalField()(-3,-2)
sage: (x*y+3*x+2*y).endpoints()
(-9.00000000000000, -0.000000000000000)
sage: ((x+2)*(y+3)-6).endpoints()
(-6.00000000000000, -2.00000000000000)
\end{verbatim}

This issue is important when checking an inequality over an interval, because it means that, depending on the expression, the recursive interval subdivision will go more or less far, which essentially determines computing time.
A rule of thumb in interval arithmetic is to minimize the number of variable occurrences.
This is however not an absolute rule and, moreover, it is intractable \cite{BU11}.
In the file \verb+radius_optimization.sage+, we define a heuristic in order to simplify the expressions of the coefficients and the discriminant of the quadratic polynomials which gives the radius of the support sphere of a tetrahedron.
These expressions are polynomials and the heuristic is a sort of ``greedy partial factorization'' which proceeds on an expression $e$ recursively as follows:
\begin{itemize}
\item if $e$ has degree $1$, return $e$;
\item if $e=f\times g$, apply recursively on $f$ and $g$;
\item otherwise, let $x^d$ be a monomial among those with the highest number of occurrences in $e$; write $e=x^df+g$; apply recursively on $f$ and $g$.
\end{itemize}
For example, this automatically rewrites the following expression which has $23$ variables occurrences
\[
x_{1} x_{2} x_{3} - x_{2}^{2} x_{3} - x_{2} x_{3}^{2} - x_{1}^{2} x_{4} + x_{1} x_{2} x_{4} + x_{1} x_{3} x_{4} + x_{2} x_{3} x_{4} - x_{1} x_{4}^{2} - x_{1} x_{2} x_{5}
\]
into this one, which has $14$ variables occurrences:
\[
-{\left(x_{2} + x_{3} - x_{4}\right)} x_{2} x_{3} + {\left(x_{2} {\left(x_{3} + x_{4} - x_{5}\right)} - {\left(x_{1} - x_{3} + x_{4}\right)} x_{4}\right)} x_{1}.
\]
This is not always optimal.
For example, the above formula can actually be written with only 11 occurrences:
$$
(x_1-x_2-x_3+x_4)(x_2x_3-x_1x_4)-x_1x_2x_5.
$$
In \verb+radius_optimization.sage+, the coefficients of $P$ (as well as its discriminant) totalize $18343$ variable occurrences over the $9$ considered cases.
This has been reduced to $1051$ by the above heuristic.
The simplified expressions are those appearing in the file file \verb+radius.cpp+ 

Let us now consider the volume, which is crucial in the computation of the density.
The already mentioned Cayley-Menger determinant gives the square of the volume multiplied by $288$.
It is a polynomial with $60$ variables occurrences.
The above heuristic reduces it to $30$ occurrences, and with further manual simplification, we managed to reduce it to $21$ (this is the formula used in the file \verb+routines.cpp+, where the $x_i$'s are squared edge lengths):\\

\noindent\resizebox{\textwidth}{!}{$
\frac{{\left({\left(x_{0} + x_{1} - x_{3}\right)}^{2} - 4 \, x_{0} x_{1}\right)} {\left({\left(x_{0} + x_{2} - x_{4}\right)}^{2} - 4 \, x_{0} x_{2}\right)}-{\left({\left(x_{0} + x_{1} - x_{3}\right)} {\left(x_{0} + x_{2} - x_{4}\right)} - 2 \, x_{0} {\left(x_{1} + x_{2} - x_{5}\right)}\right)}^{2}}{2 \, x_{0}}.$}\\

Actually, we combine this formula with another one, obtained by the mean value theorem.
This is based on the same idea used when solving a quadratic equation when the leading coefficient is an interval which contains $0$ or near the optimal tetrahedra.
For a differentiable function $f$ of a real variable $x$, we can write $f(x)=f(x_0)+(x-x_0)f'(z)$ where $z$ is between $x$ and $x_0$.
With $x$ being an interval and $x_0$ a point in $x$, this yields
\[
f(x)\subset f(x_0)+(x-x_0)f'(x).
\]
When $f$ has a complicated expression, we get an even worse expression for $f'$.
Hence, typically, the interval $f'(x)$ is larger than $f(x)$, which is undesirable.
However, we multiply $f'(x)$ by $x-x_0$, whose width is the width of $x$, that becomes smaller and smaller when we recursively divide it ($f(x_0)$ is not a problem because $x_0$ is a point, in practice a very thin interval, so $f(x_0)$ is also very thin).
Thus, for sufficiently thin $x$, using a first order formula can yield a thinner interval.
We implemented this in file \verb+routines_order1.cpp+.
Since it is not clear in advance which method yields the best result (it depends on the considered block) we compute both intervals and take the intersection, which, as usual, contain the mathematically exact result.

%%%%%%%%%%%%%%%%%%%%%%%%%%%%%%
\subsection{Final assault}
\label{sec:final}

The file \verb+check.cpp+ verifies each of the $9$ cases depicted in Fig.~\ref{fig:one_contact} using the strategy described in Section~\ref{sec:strategy}.
The space of FM-tetrahedra is divided in millions of {\bf blocks} -- a set of FM-tetrahedra whose edge lengths are intervals.
Every block is discarded if
\begin{itemize}
\item it does not intersect the space of FM-tetrahedra;
\item it lies in the neighborhoods of extrema obtained in Section~\ref{sec:epsilon};
\item it contains only tetrahedra whose density is less than the bounds claimed in Theorem~\ref{th:main}.
\end{itemize}
All the blocks turned out to be eventually discarded by the program.
Together with the analysis given in Section~\ref{sec:epsilon}, this proves Theorem~\ref{th:main}.
Table~\ref{tab:stats} provides, for each of the $9$ cases depicted in Fig.~\ref{fig:one_contact}, the number of blocks discarded during the recursive subdivision.
In total, we have examined around 2.5 billions of blocks (most of them for the type $rrrr$ case).
This required around 240 hours of CPU time on our laptop (11th Gen Intel Core i5-1145G7).
In practice, the computation was parallelized: a pool of threads handles different blocks and new threads are created when subdividing a block (as long as it does not exceed a specified maximum number of threads); it ran for around $30$ hours on our laptop.

\begin{table}
\centering
\begin{tabular}{|c|ccccc|}
\hline
&1111 & 111r & 11rr & 1rrr & rrrr\\
\hline
11 & $16\,032$ & $14\,964\,193$ & $\hphantom{0}14\,611\,062$ & &\\
1r & & $75\,943\,637$ & $112\,578\,247$ & $35\,775\,615$ &\\
rr & & & $204\,225\,779$ & $52\,226\,020$ & $2\,040\,485\,620$\\
\hline
\end{tabular}
\caption{
Number of blocks discarded while checking the $9$ cases depicted in Fig.~\ref{fig:one_contact}.
The columns correspond to the types of the tetrahedra and the rows to the tight edge.
The computation time is roughly proportional to the number of discarded blocks.
}
\label{tab:stats}
\end{table}

%%%%%%%%%%%%%%%%%%%%%%%%%%%%%%%%%%%%%%%%%%%%%%%%%%%%%%%%%%%%%%%%%%%%%%%%%%%%%%%%%%%%%%%%
%%%%%%%%%%%%%%%%%%%%%%%%%%%%%%%%%%%%%%%%%%%%%%%%%%%%%%%%%%%%%%%%%%%%%%%%%%%%%%%%%%%%%%%%
%%%%%%%%%%%%%%%%%%%%%%%%%%%%%%%%%%%%%%%%%%%%%%%%%%%%%%%%%%%%%%%%%%%%%%%%%%%%%%%%%%%%%%%%
\section*{Acknowledgements}

We would like to express our thanks to Nathalie Revol for her advice on interval arithmetic (especially techniques at order 1), and to Michael Rao for his advice on programming in C++ (especially parallelization with a thread pool).
We also warmly thank the anonymous referee, who notably pointed us toward the code \cite{Lei25}, which makes it easy to reproduce the results of \cite{Lov14} -- results that we now realize we have hitherto examined too superficially.

\appendix

%%%%%%%%%%%%%%%%%%%%%%%%%%%%%%
\section{Code overview}
\label{sec:code}

The computer plays a central role in proving the main theorem of this article.
The code is divided into 10 programs that can be found there:
\begin{center}
\url{https://github.com/fernique/florian3D}
\end{center}
We have tried to keep the code as simple and concise as possible, to make it as easy to read as possible (in total, there is only slightly more than $1000$ lines of code -- although this is obviously only a very crude measure of complexity).
Here, we list all these programs in alphabetical order and recall their respective roles (they are already all mentioned in the body of the article).

\begin{itemize}

\item \verb+appendix_B.sage+ (77 lines).
This SageMath file provides the computations used in Appendix~\ref{sec:proof_FM_faces}.

\item \verb+check.cpp+ (246 lines).
This is the main C++ file (hence the one to be compiled).
It includes the four C++ files listed after.
It checks that all the FM-tetrahedra not in the neighborhood of the densest ones are less dense.
The first line has to be changed to choose which of the $9$ cases it checks.
The constant \verb+maxThreads+ can be modified according to the number of cores of the computer the program will run on.
There are only four functions (besides the main):
\begin{itemize}
\item \verb+type+: determines the ``type'' of a block B of tetrahedra, that is an integer equal to $0$ for tetrahedra near optimal ones (case solved in \verb+local.sage+), $1$ if it is not an FM-tetrahedron, $2$ if it is ``hollow'' that is less dense that the claimed maximal density and, last but not least, $-1$ if the block is still too large to determine its type and needs to be further divided;
\item \verb+halve_block+: halves a block of tetrahedra along its widest side;
\item \verb+explore+: recursively explores the space of FM-tetrahedra (using a pool of threads);
\item \verb+initial_block+: defines the block on which the exploration is started, depending on the case to be checked.
\end{itemize}

\item \verb+local.sage+ (335 lines).
This SageMath file is devoted to the results explained in Section~\ref{sec:epsilon}.
Namely, we provide an explicit neighborhood of the tetrahedra claimed to be the densest in Theorem~\ref{th:main} over which these tetrahedra are indeed the densest ones (among FM-tetrahedra).

\item \verb+radius.cpp+ (84 lines).
This completes \verb+routines.cpp+ with the computation of the radius of the support sphere.
The expressions of the coefficients of the polynomial P of Proposition~\ref{prop:radius} are case-dependent (which makes the file not very pleasant to read, but greatly enhances the computations).

\item \verb+radius.sage+ (58 lines).
In this SageMath file, we derive the polynomial $P$ which is quadratic in the radius $R$ of the support sphere (Proposition~\ref{prop:radius}).

\item \verb+radius_optimization.sage+ (192 lines).
This SageMath file implements the heuristics described in Section~\ref{sec:formulas_opt} to optimize the coefficients of the polynomial $P$ in each of the $9$ considered cases (Fig.~\ref{fig:one_contact}).
It also outputs the optimized formula in a format that can be used in C++.

\item \verb+routines.cpp+ (115 lines).
This file contains the functions to test the validity of a block of tetrahedra, compute its volume and its density.

\item \verb+routines_order1.cpp+ (68 lines).
This completes the file \verb+routines.cpp+ with the computation at order $1$ of the volume, as explained at the end of Section~\ref{sec:formulas_opt}.

\item \verb+sliding.sage+ (46 lines).
This SageMath file checks that the inequality stated in Lemma~\ref{lem:sliding} holds for every case used in the proof of Proposition~\ref{prop:first_contact}.

\item \verb+trinome.cpp+ (45 lines).
This routine is used in \verb+radius.cpp+ to compute the roots of a quadratic polynomial (namely, the radius of the support sphere).

\end{itemize}

%%%%%%%%%%%%%%%%%%%%%%%%%%%%%%
\section{Proof of Proposition~\ref{prop:FM_faces}}
\label{sec:proof_FM_faces}

We shall need some lemmas.
Every non-trivial computation appearing in the proofs of this section is detailed in the file \verb+appendix_B.sage+.

\begin{lemma}
\label{lem:quadradic_poly}
In $\mathbb{R}^3$, consider three interior disjoint spheres of positive radii $r_A$, $r_B$ and $r_C$ respectively centered on the vertices A, B and C of a triangle.
Consider a (fourth) sphere which is interior disjoint and tangent to each of the three previous spheres.
Let $R$ denote its radius.
Then, $R$ is a root of a quadratic polynomial $P=aR^2+bR+c(z)$ whose coefficients depend on the edge length of the triangle ABC and the radii of the spheres centered on the vertices of this triangle, and $c$ depends also on the distance $z$ of the center of the fourth sphere to the plane that contains the triangle ABC.
\end{lemma}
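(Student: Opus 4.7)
The strategy mirrors the sketch already given for Prop.~\ref{prop:radius}, but with one fewer constraint (three spheres instead of four), so that the center of the fourth sphere is determined by $R$ only up to a vertical displacement $z$.

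First I would fix an isometry so that the triangle $ABC$ lies in the plane $z=0$: place $A$ at the origin, $B=(|AB|,0,0)$ on the $x$-axis, and $C=(c_x,c_y,0)$ with $c_y\geq 0$, where $c_x$ and $c_y$ are explicit functions of the edge lengths $|AB|,|AC|,|BC|$. Let $O=(x,y,z)$ denote the center of the fourth sphere. The three external tangency conditions read
\[
|OA|^2=(r_A+R)^2,\quad |OB|^2=(r_B+R)^2,\quad |OC|^2=(r_C+R)^2.
\]

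Next, subtracting the first equation from the second and from the third makes the quadratic terms $x^2,y^2,z^2,R^2$ cancel, yielding two \emph{linear} equations in $(x,y,R)$ whose coefficients depend only on the edge lengths of $ABC$ and on $r_A,r_B,r_C$. Solving this linear system gives
\[
x=\alpha_1 R+\beta_1,\qquad y=\alpha_2 R+\beta_2,
\]
where $\alpha_i$ and $\beta_i$ are explicit rational functions of the edge lengths and the radii (note that the $2\times 2$ system is non-degenerate as long as $ABC$ is a genuine triangle).

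Finally, I would substitute these expressions into the first tangency equation $x^2+y^2+z^2=(r_A+R)^2$. Expanding gives
\[
\underbrace{(\alpha_1^2+\alpha_2^2-1)}_{a}R^2+\underbrace{2(\alpha_1\beta_1+\alpha_2\beta_2-r_A)}_{b}R+\underbrace{\beta_1^2+\beta_2^2-r_A^2+z^2}_{c(z)}=0,
\]
which is the claimed quadratic $P=aR^2+bR+c(z)$. Crucially, the variable $z$ enters only through the constant term as $+z^2$, because $x$ and $y$ were determined from the \emph{differences} of the tangency equations, in which $z^2$ cancels; only the single un-subtracted equation reintroduces $z^2$, and it does so without coupling to $R$. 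This separation of $z$ from the $R$- and $R^2$-coefficients is precisely the content of the lemma.

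There is no genuine obstacle here beyond verifying that the linear system for $(x,y)$ is solvable (nondegeneracy of $ABC$) and tracking that $a$ and $b$ are $z$-free; both points are routine and are carried out symbolically in \verb+appendix_B.sage+.
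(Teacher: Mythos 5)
Your proof is correct and takes essentially the same approach as the paper: both set up the same coordinate frame, write the three tangency equations, and eliminate $(x,y)$ to land on a quadratic in $R$ whose leading and linear coefficients are $z$-free. You make explicit the "subtract equations to linearize in $(x,y,R)$" step that the paper delegates to computer algebra; the only cosmetic difference is that the paper clears denominators by a $z$-free factor (which happens to be negative), so its explicit coefficient of $z^2$ in $c$ has the opposite sign to your $+z^2$, but this does not affect the validity of the argument.
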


\begin{proof}
Denote by ab, ac and bc the length of the edges AB, AC and AD.
{\em W.l.o.g.}, A, B and C have coordinates $(0,0,0)$, $(ab,0,0)$ and $(x_c,y_c,0)$, with $y_c>0$. 
Let $(x,y,z)$ be the coordinates of the center of the sphere which is tangent to the three previous spheres (so that $z>0$ is the distance of the center of the fourth sphere to plane that contains the triangle ABC).
Let R be its radius.

We write the three equations given by the contact of the fourth sphere to the three first spheres, as well as the two equations which relates $x_c$ and $y_c$ to the edge lengths $ac$ and $bc$.
This system of equations is easily solved using computer algebra.
It yields the polynomial $P$ (its coefficients $a$, $b$ and $c$ have up to $7$ variables and degree $6$, with a total of $76$ monomials: this is why we do not write them here).
\end{proof}

\begin{lemma}
\label{lem:negative_C}
The coefficient $c(z)$ of the polynomial $P$ is negative.
\end{lemma}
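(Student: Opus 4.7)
The plan is to compute $c(z)=P(0)$ explicitly and exhibit its sign. Following the derivation sketched in Lemma~\ref{lem:quadradic_poly}, two linear combinations of the three contact equations give $x(R)=x_0+x_1 R$ and $y(R)=y_0+y_1 R$, with
\[
x_0=\frac{ab^2+r_A^2-r_B^2}{2ab},\qquad y_0=\frac{ac^2+r_A^2-r_C^2-2x_0 x_c}{2y_c},
\]
so that substituting back in the first contact equation written as $(R+r_A)^2-x(R)^2-y(R)^2-z^2=0$ and setting $R=0$ yields $c(z)=r_A^2-x_0^2-y_0^2-z^2$, up to the positive normalisation factor used in \verb+appendix_B.sage+.

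After clearing denominators by the positive factor $4ab^2 y_c^2$, the polynomial form of $c(z)$ groups as a sum of three pieces,
\[
y_c^2\bigl(4ab^2 r_A^2-(ab^2+r_A^2-r_B^2)^2\bigr)-Y_0^2-4ab^2 y_c^2 z^2,
\]
where $Y_0$ denotes the polynomial numerator of $y_0$ after clearing. The crucial algebraic observation is the twofold difference of squares
\[
4ab^2 r_A^2-(ab^2+r_A^2-r_B^2)^2=-(ab-r_A-r_B)(ab+r_A-r_B)(ab-r_A+r_B)(ab+r_A+r_B).
\]
Under the interior-disjointness hypothesis $ab\geq r_A+r_B$ together with the triangle inequalities $ab>|r_A-r_B|$, all four linear factors are non-negative, so the first summand of the grouped expression is non-positive, while the other two, $-Y_0^2$ and $-4ab^2 y_c^2 z^2$, are visibly non-positive. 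Hence $c(z)\leq 0$.

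For the strict negativity, I would observe that $c(z)=0$ would force $R=0$ to be a root of $P$, i.e.\ a zero-radius ``sphere'' (a single point) simultaneously tangent to the three given spheres. This would mean that the three spheres share a common point, which contradicts the existence of a genuine fourth tangent sphere of positive radius in the setup of Lemma~\ref{lem:quadradic_poly} combined with interior disjointness. Geometrically, $(x_0,y_0)$ is the radical centre of the three circles obtained by intersecting the three spheres with the plane of $ABC$, and the radical centre of three interior-disjoint circles always lies strictly outside each of them, which is exactly the inequality $4ab^2 r_A^2<(ab^2+r_A^2-r_B^2)^2$ after clearing.

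The main obstacle is the purely algebraic bulk: the cleared polynomial form of $c(z)$ has tens of monomials in seven variables, and neither the grouping into three summands nor the factorisation of the first one is obvious by inspection. The companion Sage script \verb+appendix_B.sage+ is precisely designed to perform these symbolic manipulations and to record the factorisation needed for the sign analysis.
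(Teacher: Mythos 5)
Your approach is correct in substance but takes a genuinely different route from the paper. You interpret $c(z)$ geometrically, recognizing that after clearing denominators the constant term is a positive multiple of $r_A^2-x_0^2-y_0^2-z^2$ where $(x_0,y_0)$ is the radical centre of the three cross-section circles, and then you exhibit a three-term grouping in which one factor is handled by the double difference-of-squares
$(2uv)^2-(u^2+v^2-w^2)^2=(u+v+w)(u+v-w)(u-v+w)(-u+v+w)$,
and the other two terms are perfect squares (or squares times positives) with minus signs. The paper instead proves $c'(z)\le 0$ directly (the derivative factors as a Heron-type product times $z$) and then, for $c(0)$, exhibits an entirely different decomposition into \emph{five} degree-6 polynomial summands $c_0,\dots,c_4$, each shown strictly negative by hand (with the remark that this decomposition was found partly by randomized computer search over $3^5$ candidates). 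Your decomposition is cleaner and better motivated, and it makes the underlying geometry transparent; the paper's decomposition is ad hoc but has one practical advantage: the term $c_4=-2r_A^2r_B^2r_C^2$ is \emph{always} strictly negative, so strictness of the whole sum is immediate.

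Two points you should tighten. First, a bookkeeping issue: once you substitute $x_c=(ab^2+ac^2-bc^2)/(2ab)$ and $x_0=(ab^2+r_A^2-r_B^2)/(2ab)$ into $y_0$, its polynomial numerator sits over $4ab^2y_c$, so the correct clearing factor is $16\,ab^4y_c^2$ (equivalently, your first summand should carry $4ab^2y_c^2=$ Heron's polynomial rather than $y_c^2$, and the $z^2$ term should carry $16\,ab^4y_c^2$); the resulting degree-8 polynomial then shares a factor $ab^2$ with the numerator $\tilde Y_0$ contribution, which is why the paper's $c$ ends up being only degree 6. This is cosmetic and does not affect the sign analysis. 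Second, and more importantly, your strictness argument has a gap. The inequality $4ab^2r_A^2<(ab^2+r_A^2-r_B^2)^2$ that you invoke at the end is \emph{not} guaranteed: it is equivalent to $ab>r_A+r_B$ strictly, whereas the hypothesis only gives $ab\ge r_A+r_B$, and the tangent case $ab=r_A+r_B$ does occur (indeed, all interesting FM-tetrahedra have tangent spheres). When $ab=r_A+r_B$ your first summand vanishes and the whole burden falls on $-Y_0^2$; one must then argue separately that $Y_0\ne 0$ there. Moreover, the contradiction you name --- ``contradicts the existence of a genuine fourth tangent sphere of positive radius'' --- is not the right one: the hypotheses of Lemma~\ref{lem:quadradic_poly} already grant such a sphere at some particular $z_0$, but Lemma~\ref{lem:negative_C} asserts negativity of $c(z)$ for \emph{all} $z$, so the existence of that tangent sphere gives no contradiction at another value of $z$. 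The correct argument is: if $c(z)=0$ then the point $(x_0,y_0,z)$ lies on all three spheres; interior-disjoint spheres can share at most a single tangent point, so all three would be pairwise tangent at that point, hence all three centres would lie on the common normal line, contradicting that $A$, $B$, $C$ form a non-degenerate triangle. With that replacement (or, more simply, by mimicking the paper's observation that at least one manifestly strictly negative term survives) your argument is complete.
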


\begin{proof}
A computation yields:
\[
c'(z)=-2(ab+ac+bc)(ab+ac-bc)(ab+bc-ac)(ac+bc-ab)z.
\]
Since $ab$, $ac$ and $bc$ are edge lengths of a triangle, each factor of $c'(z)$ is non-negative, whence $c'(z)\leq 0$.
Now, let us prove $c(0)<0$.
A computation shows that $c(0)$ is the sum of the five following terms:
\begin{eqnarray*}
c_0&=&(r_A^2+r_B^2-ab^2)(r_A^2+r_C^2-ac^2)(r_B^2+r_C^2-bc^2),\\
c_1&=&-bc^4r_A^2 + 2bc^2r_A^2r_B^2 - r_A^2r_B^4 + 2bc^2r_A^2r_C^2 - r_A^2r_C^4,\\
c_2&=&-ac^4r_B^2 + 2ac^2r_B^2r_C^2 - r_B^2r_C^4 + 2ac^2r_B^2r_A^2 - r_B^2r_A^4,\\
c_3&=&-ab^4r_C^2 + 2ab^2r_C^2r_A^2 - r_C^2r_A^4 + 2ab^2r_C^2r_B^2 - r_C^2r_B^4,\\
c_4&=&-2r_A^2r_B^2r_C^2.
\end{eqnarray*}
Since the spheres are interior disjoint, one has $ab\geq r_A+r_B$.
Hence
\[
ab^2\geq (r_A+r_B)^2=r_A^2+r_B^2+2r_Ar_B> r_A^2+r_B^2.
\]
This ensures $c_0<0$. 
Clearly, $c_4<0$.
Now, we claim that $c_1$, $c_2$ and $c_3$ are also negative.
Remark that $c_1$, $c_2$ and $c_3$ are equal up to a circular permutation of the vertices A, B, C.
It thus suffices to consider one of them, say, $c_1$.
We write
\begin{eqnarray*}
c_1
&=&r_A^2(-bc^4 + 2bc^2r_B^2 - r_B^4 + 2bc^2r_C^2 - r_C^4)\\
&=&r_A^2(bc^4-(bc^2-r_B^2)^2-(bc^2-r_C^2)^2).
\end{eqnarray*}
We thus have to prove that, for any given positive $a$ and $b$, the function
\[
f(x)=x^4-(x^2-a^2)^2-(x^2-b^2)^2
\]
is negative for every $x\geq a+b$.
This is established by computing
\[
f'(x)=4x(a^2+b^2-x^2)<0
\quad\textrm{and}\quad
f(a+b)=-2a^2b^2<0.
\]
Therefore, $c(0)$ being a sum of five negative terms, it is negative.
With $c'(z)<0$ this yields the claimed result.
\end{proof}

\begin{remark}
The key point of the above proof is the decomposition of $c(0)$ in a sum of five suitable terms.
Let us explain how this decomposition was obtained, which may be more interesting than the proof itself.

First, we defined $c_0$ by guessing, trying to find a factorized expression whose sign was easy to determine and which captured as many monomials of $c(0)$ as possible.

Then, we noticed that subtracting the single term $c_4$ to $c(0)-c_0$ yields $15$ terms which form $5$ groups of $3$ terms equal up to a circular permutation on A,B,C.
Using the computer, we considered the $3^5$ sums formed by one term from each of these $5$ groups and, for each, we checked on a few thousand randomly generated examples whether it was always negative.
The only sums to pass this test were $c_1$, $c_2$ and $c_3$.
Relatively simple, we were finally able to demonstrate their negativity by hand.
\end{remark}

\begin{lemma}
\label{lem:increasing_radius}
If the polynomial $P$ has a positive root for some $z_0>0$,
then it has at least one positive root for every $z\in[0,z_0]$
and the smallest positive root is non-decreasing in $z$.
\end{lemma}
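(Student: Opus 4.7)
The plan is to exploit the two facts already established about the coefficient $c(z)$ in Lemma~\ref{lem:negative_C}: namely $c(z)<0$ for all $z\geq 0$, and $c'(z)\leq 0$ (so $c$ is non-increasing in $z$). Note that $a$ and $b$ depend only on the triangle and the three radii $r_A,r_B,r_C$, so they do not depend on $z$. Because $P(0;z)=c(z)<0$, the polynomial $P(\cdot;z)$ always takes a negative value at the origin. I would then split into cases on the sign of $a$ and, in each case, write the roots explicitly and track how they vary with $z$.

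If $a>0$, the parabola $P(\cdot;z)$ opens upwards and its value at $0$ is negative, so it has exactly one positive root for every $z\geq 0$ and existence is automatic. That root is
\[
R_+(z)=\frac{-b+\sqrt{b^2-4ac(z)}}{2a}.
\]
Since $c(z)$ is non-increasing and $a>0$, the discriminant $b^2-4ac(z)$ is non-decreasing, hence $R_+$ is non-decreasing in $z$. The case $a=0$ is similar and even simpler: $P=bR+c(z)$ has a positive root iff $b>0$ (because $c(z)<0$), in which case the root is $-c(z)/b$, which again is non-decreasing in $z$ by monotonicity of $c$.

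The main case, and the one on which to focus, is $a<0$. Here $P(\cdot;z)$ is a downward parabola with $P(0;z)<0$, and the product of its roots is $c(z)/a>0$, so both roots have the same sign; their sum is $-b/a$, which is positive iff $b>0$. Thus a positive root at $z_0$ forces $b>0$, and since $b$ does not depend on $z$, this condition persists for every $z$. The remaining condition is the non-negativity of the discriminant $\Delta(z):=b^2-4ac(z)$. Because $a<0$ and $c(z)$ is non-increasing, $ac(z)$ is non-decreasing, so $\Delta(z)$ is non-increasing in $z$. Therefore $\Delta(z_0)\geq 0$ implies $\Delta(z)\geq 0$ for every $z\in[0,z_0]$, which yields existence of positive roots throughout $[0,z_0]$. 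The smallest positive root is
\[
R_+(z)=\frac{-b+\sqrt{\Delta(z)}}{2a},
\]
and since $\Delta$ is non-increasing in $z$ and $2a<0$, $R_+$ is again non-decreasing in $z$.

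Collecting the three cases gives both claims of the lemma. I expect no serious technical obstacle: once the two properties of $c(z)$ from Lemma~\ref{lem:negative_C} are in hand, the argument is a careful sign-tracking through the quadratic formula. The one point that requires a little attention is the $a<0$ case, where one must separately handle the sign of $b$ and the non-negativity of the discriminant, and check that the monotonicity of $c$ works in the right direction to propagate the existence of a real positive root from $z_0$ down to $0$.
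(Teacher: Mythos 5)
Your proof is correct and follows the same basic case split on the sign of $a$ as the paper's, but it replaces two explicit computations with cleaner sign-tracking. The paper verifies $\Delta(0)\geq 0$ by an explicit factorization, then writes $\Delta(z)=ez^2+\Delta(0)$ (where ``$e$ can be positive or negative'') and combines $\Delta(0)\geq 0$ with $\Delta(z_0)\geq 0$ to propagate non-negativity of the discriminant over $[0,z_0]$; it then establishes monotonicity of $R_+$ from an explicit expression for $R_+'(z)$ obtained by computer algebra. You instead exploit the fact that $c$ is non-increasing --- which Lemma~\ref{lem:negative_C} already delivers via the sign of $c'$ --- and that $a,b$ are constant in $z$, so that $\Delta(z)=b^2-4ac(z)$ is monotone in $z$ with direction governed by the sign of $a$; this single observation propagates both the existence of a real positive root and the non-decrease of $R_+$, with no further algebra. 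In fact, since $c(z)=c(0)-Kz^2$ for some $K\geq 0$ (read off from the paper's expression for $c'$), the paper's $e$ equals $4aK$, so its sign is exactly that of $a$: your phrasing surfaces a structural fact that the paper leaves implicit by splitting separately on the sign of $e$ and on the sign of $a$. Your version also avoids the separate verification of $\Delta(0)\geq 0$ entirely (for $a>0$ an upward parabola with $c(z)<0$ at the origin automatically has a positive discriminant), and it fills in the degenerate case $a=0$, which the paper's case analysis does not treat.
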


\begin{proof}
Let $\Delta(z)$ denote the discriminant of $P$.
A computation yields
\[
\Delta(Z)=ez^2+\Delta(0),
\]
where $e$ does not depend on $z$.
More precisely, $\Delta(0)$ is the product of
\[
(ab+ac+bc)(ab+ac-bc)(ab-ac+bc)(ab-ac-bc)
\]
and
\[
(ab+r_A-r_B)(ab-r_A+r_B)(ac+r_A-r_C)(ac-r_A+r_C)(bc+r_B-r_C)(bc-r_B+r_C).
\]
The first term is non-negative because ABC is a triangle.
The second term is positive because spheres do not intersect.
This ensures $\Delta(0)\geq 0$.
Random tries show that $e$ can be positive or negative.
However, for $z=z_0$ we have $\Delta(z_0)=ez_0^2+\Delta(0)\geq 0$ since $P$ has a real root.
This ensures $\Delta(z)\geq 0$ for every $z\in[0,z_0]$, that is, $P$ has real roots over this interval.
These roots are
\[
R_-=\frac{-b-\sqrt{b^2-4ac}}{2a}
\quad\textrm{and}\quad
R_+=\frac{-b+\sqrt{b^2-4ac}}{2a}.
\]
Recall that $c<0$ by Lemma~\ref{lem:negative_C}.
Hence
\begin{itemize}
\item if $a>0$, then $\sqrt{b^2-4ac}>|b|$ and $R_-\leq 0\leq R_+$: there is a unique positive root, $R_+$.
\item if $a<0$, then $\sqrt{b^2-4ac}<|b|$: either $R_+\leq R_-\leq 0$ if $-b/a<0$ or $0\leq R_+\leq R_-$ otherwise.
The first case is excluded because, for $z=z_0$, $P$ would not have a positive root (neither $a$ nor $b$ depends on $z$).
Thus $0\leq R_+\leq R_-$ and the smallest positive root is, again, $R_+$.
\end{itemize}
Now, a computation yields
\[
R_+'(z)=\frac{
(ab + ac + bc)(ab + ac - bc)(ab - ac + bc)(ac + bc -ab)z}{\sqrt{\cdots}}.
\]
The numerator is non-negative for $z\in[0,z_0]$.
The denominator is positive as the square root of some expression (namely a polynomial of degree $10$ with $339$ monomials).
This ensures that the smallest positive root $R_+$ is non-decreasing in $z$.
\end{proof}

\noindent We are now in a position to prove Proposition~\ref{prop:FM_faces}:\\

\begin{proof}
Consider an FM-tetrahedron and a face $ABC$ of it.
By definition, its support sphere is tangent to the spheres in A, B and C.
Its radius $R$ is thus a root $R(z_0)$ of a quadratic polynomial $P$ by Lemma~\ref{lem:quadradic_poly}.
It is the smallest positive root because any larger positive root would correspond to a support sphere whose interior intersects the spheres centered in A, B and C.
Since $R$ is positive, Lemma~\ref{lem:increasing_radius} ensures that we can decrease $z$ from $z_0$ to $0$ to get a smaller sphere of radius $R(0)<r$ whose center is in the face ABC.
Seen in the face ABC, this sphere is exactly a support circle for the triangle ABC, which is thus an FM-triangle as claimed.    
\end{proof}

%%%%%%%%%%%%%%%%%%%%%%
\bibliographystyle{alpha}
\bibliography{florian3D}

\end{document}